\theoremstyle{plain}
\newtheorem{lemma}{Lemma}[section]
\newtheorem{conjecture}[lemma]{Conjecture}
\newtheorem{proposition}[lemma]{Proposition}
\theoremstyle{definition}
\newtheorem{definition}[lemma]{Definition}
\newtheorem{question}{Question}
\newtheorem{remark}{Remark}
\newcommand{\N}{\mathbb{N}}
\newcommand{\bP}{\mathbb{P}}
\newcommand{\Q}{\mathbb{Q}}
\newcommand{\R}{\mathbb{R}}
\newcommand{\Z}{\mathbb{Z}}
\newcommand{\PH}{\mathrm{PH}}
\newcommand{\cecha}[3]{\mathrm{\check{C}}(#1,#2;#3)}
\newcommand{\vr}[2]{\mathrm{VR}(#1;#2)}
\newcommand{\diam}{\ensuremath\mathrm{diam}}
\newcommand{\Exp}{\ensuremath\mathrm{Exp}}
\newcommand{\supp}{\ensuremath\mathrm{supp}}
\begin{document}
\title{A fractal dimension for measures via persistent homology}
\author{Henry Adams}
\author{Manuchehr Aminian}
\author{Elin Farnell}
\author{Michael Kirby}
\author{Joshua Mirth}
\author{Rachel Neville}
\author{Chris Peterson}
\author{Clayton Shonkwiler}

\begin{abstract}
We use persistent homology in order to define a family of fractal dimensions, denoted $\dim_{\PH}^i(\mu)$ for each homological dimension $i\ge 0$, assigned to a probability measure $\mu$ on a metric space.
The case of $0$-dimensional homology ($i=0$) relates to work by Michael J Steele (1988) studying the total length of a minimal spanning tree on a random sampling of points.
Indeed, if $\mu$ is supported on a compact subset of Euclidean space $\R^m$ for $m\ge2$, then Steele's work implies that $\dim_{\PH}^0(\mu)=m$ if the absolutely continuous part of $\mu$ has positive mass, and otherwise $\dim_{\PH}^0(\mu)<m$.
Experiments suggest that similar results may be true for higher-dimensional homology $0<i<m$, though this is an open question.
Our fractal dimension is defined by considering a limit, as the number of points $n$ goes to infinity, of the total sum of the $i$-dimensional persistent homology interval lengths for $n$ random points selected from $\mu$ in an i.i.d.\ fashion.
To some measures $\mu,$ we are able to assign a finer invariant, a curve measuring the limiting distribution of persistent homology interval lengths as the number of points goes to infinity.
We prove this limiting curve exists in the case of $0$-dimensional homology when $\mu$ is the uniform distribution over the unit interval, and conjecture that it exists when $\mu$ is the rescaled probability measure for a compact set in Euclidean space with positive Lebesgue measure.
\end{abstract}

\maketitle

\section{Introduction}\label{sec:intro}

Let $X$ be a metric space equipped with a probability measure $\mu$.
While fractal dimensions are most classically defined for a space, there are a variety of fractal dimension definitions for a measure, including the Hausdorff or packing dimension of a measure~\cite{falconer2004fractal,pesin2008dimension,cutler1993review}.
In this paper we use persistent homology to define a fractal dimension $\dim_{\PH}^i(\mu)$ associated to a measure $\mu$ for each homological dimension $i\ge 0$.
Roughly speaking, $\dim_{\PH}^i(\mu)$ is determined by how the lengths of the persistent homology intervals for a random sample, $X_n$, of $n$ points from $X$ vary as $n$ tends to infinity.

Our definition should be thought of as a generalization, to higher homological dimensions, of fractal dimensions related to minimal spanning trees, as studied, for example, in~\cite{Steele1988}.
Indeed, the lengths of the $0$-dimensional (reduced) persistent homology intervals corresponding to the Vietoris--Rips complex of a sample $X_n$ are equal to the lengths of the edges in a minimal spanning tree with $X_n$ as the set of vertices.
In particular, if $X$ is a subset of Euclidean space $\R^m$ with $m\ge 2$, then~\cite[Theorem~1]{Steele1988} by Steele implies that $\dim_{\PH}^0(\mu)\le m$, with equality when the absolutely continuous part of $\mu$ has positive mass (Proposition~\ref{prop:Steele}). 
Theoretical extensions of our work are considered in~\cite{schweinhart2018weighted,schweinhart2018persistent-3}, and an independent generalization of Steele's work to higher homological dimensions is considered in~\cite{divol-polonik}.

To some metric spaces $X$ equipped with a measure $\mu$ we are able to assign a finer invariant that contains more information than just the fractal dimension.
Consider the set of the lengths of all intervals in the $i$-dimensional persistent homology for $X_n$.
Experiments suggest that when probability measure $\mu$ is absolutely continuous with respect to the Lebesgue measure on $X\subseteq \R^m$, the scaled set of interval lengths in each homological dimension $i$ converges  point-wise to some fixed probability distribution (depending on $\mu$ and $i$).
It is easy to prove the weaker notion of convergence distribution-wise in the simple case of $0$-dimensional homology when $\mu$ is the uniform distribution over the unit interval, in which case we can also derive a formula for the limiting distribution.
Experiments suggest that when $\mu$ is the rescaled probability measure corresponding to a compact set $X\subseteq \R^m$ of positive Lebesgue measure, then a limiting rescaled distribution exists that depends only on $m$, $i$, and the volume of $\mu$ (see Conjecture~\ref{conj:PH-distributions}).
We would be interested to know the formulas for the limiting distributions with higher Euclidean and homological dimensions.

Whereas Steele in~\cite{Steele1988} studies minimal spanning trees on random subsets of a space, Kozma, Lotker, and Stupp in~\cite{kozma2006minimal} study minimal spanning trees built on extremal subsets.
Indeed, they define a fractal dimension for a metric space $X$ as the infimum, over all powers $d$, such that for any minimal spanning tree $T$ on a finite number of points in $X$, the sum of the edge lengths in $T$ each raised to the power $d$ is bounded.
They relate this extremal minimal spanning tree dimension to the box counting dimension.
Their work is generalized to higher homological dimensions by Schweinhart~\cite{schweinhart2018persistent}.
By contrast, we instead generalize Steele's work~\cite{Steele1988} on measures to higher homological dimensions.
Three differences between~\cite{kozma2006minimal,schweinhart2018persistent} and our work are the following.
\begin{itemize}
\item The former references define a fractal dimension for metric spaces, whereas we define a fractal dimension for measures.
\item The fractal dimension in~\cite{kozma2006minimal,schweinhart2018persistent} is defined using extremal subsets, whereas we define our fractal dimension using random subsets.
\item We can estimate our fractal dimension computationally using log-log plots as in Section~\ref{sec:experiments}, whereas we do not know a computational technique for estimating the fractal dimensions in~\cite{kozma2006minimal,schweinhart2018persistent}.
\end{itemize}

After describing related work in Section~\ref{sec:related}, we give preliminaries on fractal dimensions and on persistent homology in Section~\ref{sec:preliminaries}.
We present the definition of our fractal dimension and prove some basic properties in Section~\ref{sec:definition}. We demonstrate example experimental computations in Section~\ref{sec:experiments}; our code is publicly available at \url{https://github.com/CSU-PHdimension/PHdimension}.
Section~\ref{sec:limiting distributions} describes how limiting distributions, when they exist, form a finer invariant.
Sections~\ref{sec:sierpinski} and~\ref{sec:asymptotic} discuss the computational details involved in sampling from certain fractals and estimating asymptotic behavior, respectively.
Finally we present our conclusion in Section~\ref{sec:conclusion}.
One of the main goals of this paper is to pose questions and conjectures, which are shared throughout.

\section{Related work}\label{sec:related}

\subsection{Minimal spanning trees}
\label{ss:mst}

The paper~\cite{Steele1988} studies the total length of a minimal spanning tree for random subsets of Euclidean space.
Let $X_n$ be a random sample of points from a compact subset of $\R^d$ according to some probability distribution.
Let $M_n$ be the sum of all the edge lengths of a minimal spanning tree on vertex set $X_n$.
Then for $d\ge 2$, Theorem 1 of~\cite{Steele1988} says that
\begin{equation}\label{eq:Steele}
M_n\sim Cn^{(d-1)/d}\quad\mbox{as}\quad n\to\infty,
\end{equation}
where the relation $\sim$ denotes asymptotic convergence, with the 
ratio of the terms approaching one in the specified limit. Here, $C$ is a constant depending on $d$ and on the integral $\int f^{(d-1)/d}$, where $f$ is the density of the absolutely continuous part of the probability distribution\footnote{If the compact subset has Hausdorff dimension less than $d$, then~\cite{Steele1988} implies $C=0$.}.
There has been a wide variety of related work, including for example~\cite{aldous1992asymptotics,aldous2004objective,alexander1996rsw,jaillet1995properties,steele1993probability,steele2002minimal,steele1987number,steele1986boundary}.
See~\cite{kesten1996central} for a version of the central limit theorem in this context.
The papers~\cite{penrose1997longest,penrose1999strong} study the length of the longest edge in the minimal spanning tree for points sampled uniformly at random from the unit square, or from a torus of dimension at least two, and~\cite{kozma2010connectivity} extends this to any Ahlfors regular measure with connected support (i.e., to any connected semi-uniform metric measure space).
By contrast,~\cite{kozma2006minimal} studies Euclidean minimal spanning trees built on extremal finite subsets, as opposed to random subsets.

\subsection{Umbrella theorems for Euclidean functionals}\label{ss:umbrella}

As Yukich explains in his book~\cite{yukich2006probability}, there are a wide variety of Euclidean functionals, such as the length of the minimal spanning tree, the length of the traveling salesperson tour, and the length of the minimal matching, which all have scaling asymptotics analogous to \eqref{eq:Steele}.
To prove such results, one needs to show that the Euclidean functional of interest satisfies translation invariance, subadditivity, superadditivity, and continuity, as in~\cite[Page~4]{costa2006determining}.
Superadditivity does not always hold, for example it does not hold for the minimal spanning tree length functional, but there is a related ``boundary minimal spanning tree functional" that does satisfy superadditivity.
Furthermore, the boundary functional has the same asymptotics as the original functional, which is enough to prove scaling results.
It is intriguing to ask if these techniques will work for functionals defined using higher-dimensional homology.

\subsection{Random geometric graphs}

In this paper we consider simplicial complexes (say Vietoris--Rips or \v{C}ech) with randomly sampled points as the vertex set.
The 1-skeleta of these simplicial complexes are random geometric graphs.
We recommend the book~\cite{penrose2003random} by Penrose as an introduction to random geometric graphs; related families of random graphs are also considered in~\cite{penrose2001central}.
Random geometric graphs are often studied when the scale parameter $r(n)$ is a function of the number of vertices $n$, with $r(n)$ tending to zero as $n$ goes to infinity. Instead, in this paper we are more interested in the behavior over all scale parameters simultaneously.
From a slightly different perspective, the paper~\cite{kellerer1983number} studies the expected Euler characteristic of the union of randomly sampled balls (potentially of varying radii) in the plane.

\subsection{Persistent homology}

Vanessa Robins' thesis~\cite{robins2000computational} contains many related ideas; we describe one such example here.
Given a set $X\subseteq\R^m$ and a scale parameter $\varepsilon\ge0$, let
\[X_\varepsilon=\{y\in\R^m~|~\mbox{there exists some }x\in X\mbox{ with }d(y,x)\le\varepsilon\}\]
denote the \emph{$\varepsilon$-offset of $X$}. The $\varepsilon$-offset of $X$ is equivalently the union of all closed $\varepsilon$ balls centered at points in $X$.
Furthermore, let $C(X_\varepsilon)\in\N$ denote the number of connected components of $X_\varepsilon$.
In Chapter 5, Robins shows that for a generalized Cantor set $X$ in $\R$ with Lebesgue measure 0, the box-counting dimension of $X$ is equal to the limit
\[ \lim_{\varepsilon\to0}\frac{\log(C(X_\varepsilon))}{\log(1/\varepsilon)}. \]
Here Robins considers the entire Cantor set, whereas we study random subsets thereof.

The paper~\cite{macpherson2012measuring}, which heavily influenced our work, introduces a fractal dimension defined using persistent homology.
This fractal dimension depends on thickenings of the entire metric space $X$, as opposed to random or extremal subsets thereof. As a consequence, the computed dimension of some fractal shapes (such as the Cantor set cross the interval) disagrees significantly with the Hausdorff or box-counting dimension.

Schweinhart's paper~\cite{schweinhart2018persistent} takes a slightly different approach from ours, considering extremal (as opposed to random) subsets.
After fixing a homological dimension $i$, Schweinhart assigns a fractal dimension to each metric space $X$ equal to the infimum over all powers $d$ such that for any finite subset $X'\subseteq X$, the sum of the $i$-dimensional persistent homology bar lengths for $X'$, each raised to the power $d$, is bounded.
For low-dimensional metric spaces Schweinhart relates this dimension to the box counting dimension.

More recently, Divol and Polonik~\cite{divol-polonik} independently obtain generalizations of~\cite{Steele1988,yukich2006probability} to higher homological dimensions.
In particular, they prove our Conjecture~\ref{conj:PH-scaling} in the case when $X$ is a cube, and remark that a similar construction holds when the cube is replaced by any convex body.
Related results are obtained in two papers by Schweinhart, which are in part inspired by our work: in~\cite{schweinhart2018weighted} when $X$ is a ball or sphere, and afterwards in~\cite{schweinhart2018persistent-3} when points are sampled from a fractal according to an Ahlfors regular measure.

There is a growing literature on the topology of random geometric simplicial complexes, including in particular the homology of Vietoris--Rips and \v{C}ech complexes built on top of random points in Euclidean space~\cite{Bobrowski2018,kahle2011random,adler2010persistent}.
The paper~\cite{bobrowski2015maximally} shows that for $n$ points sampled from the unit cube $[0,1]^d$ with $d\ge2$, the maximally persistent cycle in dimension $1\le k\le d-1$ has persistence of order $\Theta((\frac{\log n}{\log\log n})^{1/k})$, where the asymptotic notation big Theta means both big O and big Omega.
The homology of Gaussian random fields is studied in~\cite{adler2013crackle}, which gives the expected $k$-dimensional Betti numbers in the limit as the number of points increases to infinity, and also in~\cite{bobrowski2012euler}. The paper~\cite{edelsbrunner2017expected} studies the number of simplices and critical simplices in the alpha and Delaunay complexes of Euclidean point sets sampled according to a Poisson process.
An open problem about the birth and death times of the points in a persistence diagram coming from sublevelsets of a Gaussian random field is stated in Problem~1 of~\cite{edelsbrunner2012current}.
The paper~\cite{chazal2018density} shows that the expected persistence diagram, from a wide class of random point clouds, has a density with respect to the Lebesgue measure.
We refer the reader also to~\cite{hiraoka2018limit,owada2018convergence}, which are related to our Conjecture~\ref{conj:PH-distributions-uniform} in the setting of point processes. 

The paper~\cite{breiding2018learning} explores what attributes of an algebraic variety can be estimated from a random sample, such as the variety's dimension, degree, number of irreducible components, and defining polynomials; one of their estimates of dimension is inspired by our work.

In an experiment in~\cite{persistenceimages}, persistence diagrams are produced from random subsets of a variety of synthetic metric space classes.
Machine learning tools, with these persistence diagrams as input, are then used to classify the metric spaces corresponding to each random subset.
The authors obtain high classification rates between the different metric spaces.
It is likely that the discriminating power is based not only on the underlying homotopy types of the shape classes, but also on the shapes' dimensions as detected by persistent homology.

\section{Preliminaries}\label{sec:preliminaries}

This section contains background material and notation on fractal dimensions and persistent homology.

\subsection{Fractal dimensions}

The concept of fractal dimension was introduced by Hausdorff and others~\cite{hausdorff1918dimension,bouligand1928ensembles,edgar1993classics} to describe spaces like the Cantor set.
It was later popularized by Mandelbrot~\cite{mandelbrot1982fractal}, and found extensive application in the study of dynamical systems.
The attracting sets of a simple dynamical system is often a submanifold, with an obvious dimension, but in non-linear and chaotic dynamical systems the attracting set may not be a manifold.
The Cantor set, defined by removing the middle third from the interval $[0,1]$, and then recursing on the remaining pieces, is a typical example.
It has the same cardinality as $\R$, but it is nowhere-dense, meaning it at no point resembles a line.
The typical fractal dimension of the Cantor set is $\log_3(2)$.
Intuitively, the Cantor set has ``too many'' points to have dimension zero, but also should not have dimension one.

We speak of fractal dimensions in the plural because there are many different definitions.
In particular, fractal dimensions can be divided into two classes, which have been called ``metric'' and ``probabilistic''~\cite{farmer1982}.
The former describe only the geometry of a metric space.
Two widely-known definitions of this type, which often agree on well-behaved fractals, but are not in general equal, are the box-counting and Hausdorff dimensions.
For an inviting introduction to fractal dimensions see~\cite{falconer2004fractal}.
Dimensions of the latter type take into account both the geometry of a given set and a probability distribution supported on that set---originally the ``natural measure'' of the attractor given by the associated dynamical system, but in principle any probability distribution can be used.
The information dimension is the best known example of this type.
For detailed comparisons, see~\cite{farmerottyorke1983}.
Our persistent homology fractal dimension, Definition~\ref{def:PH-curve-dimension}, is of the latter type.

For completeness, we exhibit some of the common definitions of fractal dimension.
The primary definition for sets is given by the Hausdorff dimension~\cite{FollandRealAnalysis}.
\begin{definition}
Let $S$ be a subset of a metric space $X$, let $d \in [0,\infty)$, and let $\delta > 0$.
The \emph{Hausdorff measure} of $S$ is 
\[
H_d(S) = \inf_{\delta}\left(\inf\left\{ \sum_{j=1}^{\infty} \diam(B_j)^d ~\vert~ S\subseteq \bigcup_{j=1}^{\infty} B_j\text{ and } \diam(B_j) \le \delta \right\}\right),
\]
where the inner infimum is over all coverings of $S$ by balls $B_j$ of diameter at most $\delta$.
The \emph{Hausdorff dimension} of $S$ is
\[
\dim_H(S) = \inf_{d}\{H_d(S) = 0.\}
\]
\end{definition}
The Hausdorff dimension of the Cantor set, for example, is $\log_3(2)$.

In practice it is difficult to compute the Hausdorff dimension of an arbitrary set, which has led to a number of alternative fractal dimension definitions in the literature.
These dimensions tend to agree on well-behaved fractals, such as the Cantor set, but they need not coincide in general. Two worth mentioning are the box-counting dimension, which is relatively simple to define, and the correlation dimension.

\begin{definition}
Let $S \subseteq X$ a metric space, and let $N_{\varepsilon}$ denote the infimum of the number of closed balls of radius $\epsilon$ required to cover $S$.
Then the \emph{box-counting dimension} of $S$ is 
\[
\dim_B(S) = \lim_{\varepsilon \to 0} \frac{\log(N_{\varepsilon})}{\log(1/\varepsilon)},
\]
provided this limit exists.
Replacing the limit with a $\limsup$ gives the \emph{upper} box-counting dimension, and a $\liminf$ gives the \emph{lower} box-counting dimension.
\end{definition}

The box-counting definition is unchanged if $N_\epsilon$ is instead defined by taking the number of open balls of radius $\varepsilon$, or the number of sets of diameter at most $\varepsilon$, or (for $S$ a subset of $\R^n$) the number of cubes of side-length $\varepsilon$~\cite[Definition~7.8]{vallin2013elements},~\cite[Equivalent Definitions~2.1]{falconer2004fractal}.
It can be shown that $\dim_B(S) \ge \dim_H(S)$.
This inequality can be strict; for example if $S=\Q\cap[0,1]$ is the set of all rational numbers between zero and one, then $\dim_H(S)=0<1=\dim_B(S)$~\cite[Chapter~3]{falconer2004fractal}.
If $S$ is a self-similar shape that is nice enough, i.e.\ satisfies an ``open set" condition, then~\cite[Theorem~9.3]{falconer2004fractal} (for example) shows that the box-counting and Hausdorff dimensions agree: $\dim_B(S)=\dim_H(S)$.

In Section~\ref{sec:definition} we introduce a fractal dimension based on persistent homology which shares key similarities with the Hausdorff and box-counting dimensions.
It can also be easily estimated via log-log plots, and it is defined for arbitrary metric spaces (though our examples will tend to be subsets of Euclidean space).
A key difference, however, will be that ours is a fractal dimension for measures, rather than for subsets.

There are a variety of classical notions of a fractal dimension for a measure, including the Hausdorff, packing, and correlation dimensions of a measure~\cite{falconer2004fractal,pesin2008dimension,cutler1993review}.
We give the definitions of two of these.

\begin{definition}[(13.16) of~\cite{falconer2004fractal}]
The \emph{Hausdorff dimension} of a measure $\mu$ with total mass one is defined as
\[\dim_H(\mu)=\inf\{\dim_H(S)~|~S\mbox{ is a Borel subset with }\mu(S)>0\}.\]
\end{definition}
We have $\dim_H(\mu)\le\dim_H(\supp(\mu))$, and it is possible for this inequality to be strict~\cite[Exercise~3.10]{falconer2004fractal}\footnote{See also~\cite{farmer1982} for an example of a measure whose \emph{information dimension} is less than the Hausdorff dimension of its support.}. We also give the definition of the correlation dimension of a measure.

\begin{definition}
Let $X$ be a subset of $\R^m$ equipped with a measure $\mu$, and let $X_n$ be a random sample of $n$ points from $X$.
Let $\theta\colon \R\to \R$ denote the Heaviside step function, meaning $\theta(x)=0$ for $x<0$ and $\theta(x)=1$ for $x\ge 0$.
The \emph{correlation integral} of $\mu$ is defined (for example in~\cite{GrassbergerProcaccia1983,theiler1990estimating}) to be
\[
C(r) = \lim_{n \to \infty} \frac{1}{n^2} \sum_{\substack{x,x'\in X_n \\ x\neq x'}} \theta\left(r - \| x-x' \|\right) .
\]
It can be shown that $C(r) \propto r^\nu$, and the exponent $\nu$ is defined to be the \emph{correlation dimension} of $\mu$.
\end{definition}
In~\cite{GrassbergerProcaccia1983,GrassbergerProcaccia2004} it is shown that the correlation dimension gives a lower bound on the Hausdorff dimension of a measure.
The correlation dimension can be easily estimated from a log-log plot, similar to the methods we use in Section~\ref{sec:experiments}. A different definition of the correlation dimension is given and studied in~\cite{cutler1991some,mattila2000dimension}. 
The correlation dimension is a particular example of the family of \emph{R{\`e}nyi dimensions}, which also includes the \emph{information dimension} as a particular case~\cite{renyi1959dimension,renyi1970probability}. A collection of possible axioms that one might like to have such a fractal dimension satisfy is given in~\cite{mattila2000dimension}.

\subsection{Persistent homology}

The field of applied and computational topology has grown rapidly in recent years, with the topic of persistent homology gaining particular prominence.
Persistent homology has enjoyed a wealth of meaningful applications to areas such as image analyis, chemistry, natural language processing, and neuroscience, to name just a few examples~\cite{Adcock2014lesions,Bendich2016brainartery,Collins2004shapedescriptor,dabaghian2012hippocampus,Lee2012brainnetwork,Leon2013gait,Xia2015proteinfolding,zhu2013NLP}.
The strength of persistent homology lies in its ability to characterize important features in data across multiple scales.
Roughly speaking, homology provides the ability to count the number of independent $k$-dimensional holes in a space, and persistent homology provides a means of tracking such features as the scale increases.
We provide a brief introduction to persistent homology in this preliminaries section, but we point the interested reader to~\cite{armstrong2013basic,EdelsbrunnerHarer,Hatcher} for thorough introductions to homology, and to~\cite{Carlsson2009,curry2015TDA,ghrist2008barcodes} for excellent expository articles on persistent homology.

Geometric complexes, which are at the heart of the work in this paper, associate to a set of data points a simplicial complex---a combinatorial space that serves as a model for an underlying topological space from which the data has been sampled.
The building blocks of simplicial complexes are called simplices, which include vertices as 0-simplices, edges as 1-simplices, triangles as 2-simplices, tetrahedra as 3-simplices, and their higher-dimensional analogues as $k$-simplices for larger values of $k$.
An important example of a simplicial complex is the Vietoris--Rips complex.
\begin{definition}\label{def:vr}
Let $X$ be a set of points in a metric space and let $r\geq 0$ be a scale parameter.
We define the Vietoris--Rips simplicial complex $\vr{X}{r}$ to have as its $k$-simplices those collections of $k+1$ points in $X$ that have diameter at most $r$. 
\end{definition}
In constructing the Vietoris--Rips simplicial complex we translate our collection of points in $X$ into a higher-dimensional complex that models topological features of the data.
See Figure~\ref{fig:VRexample} for an example of a Vietoris--Rips complex constructed from a set of data points, and see~\cite{EdelsbrunnerHarer} for an extended discussion.

\begin{figure}[h]
\includegraphics[width=0.3\textwidth]{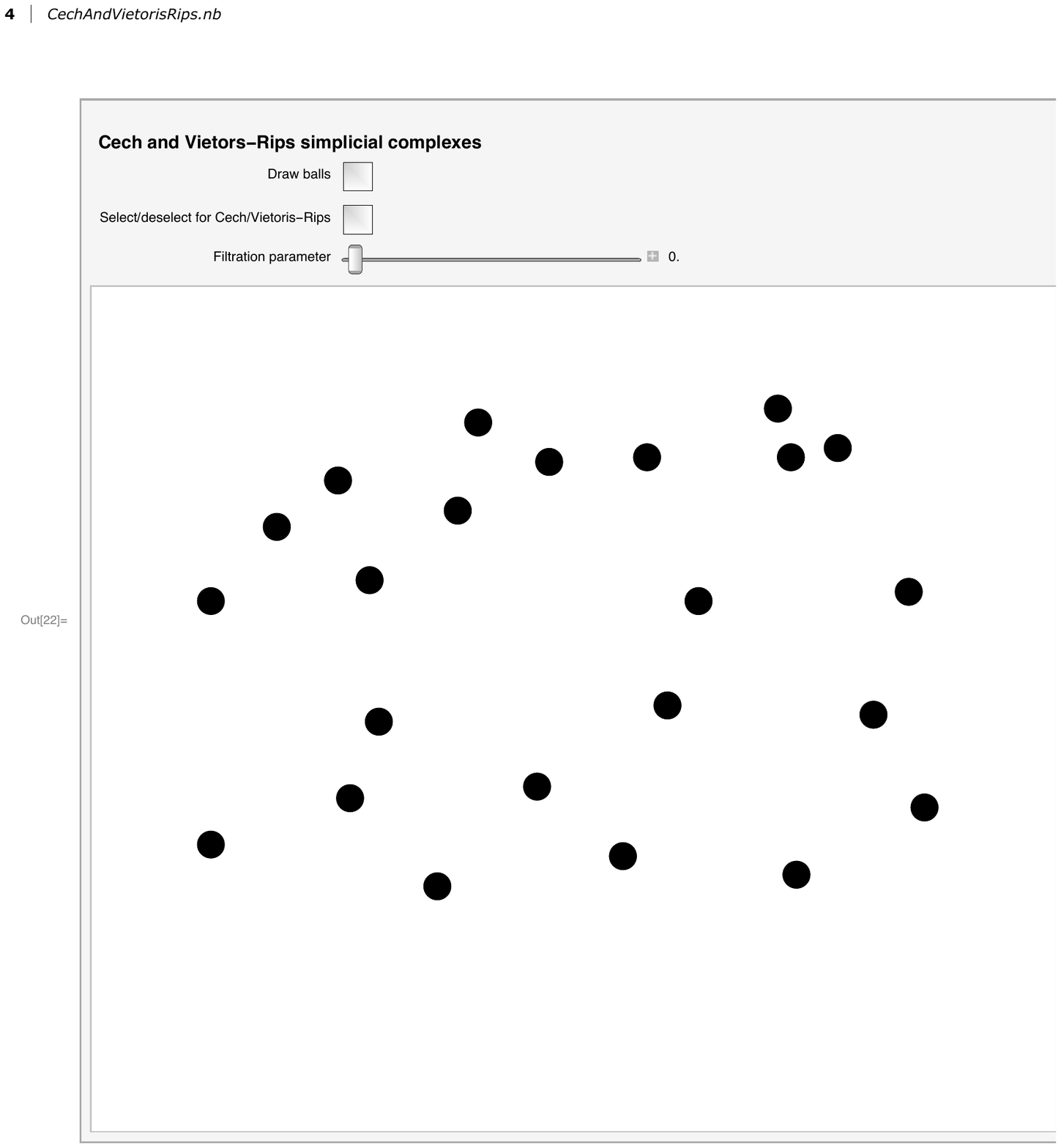}
\hspace{20mm}
\includegraphics[width=0.3\textwidth]{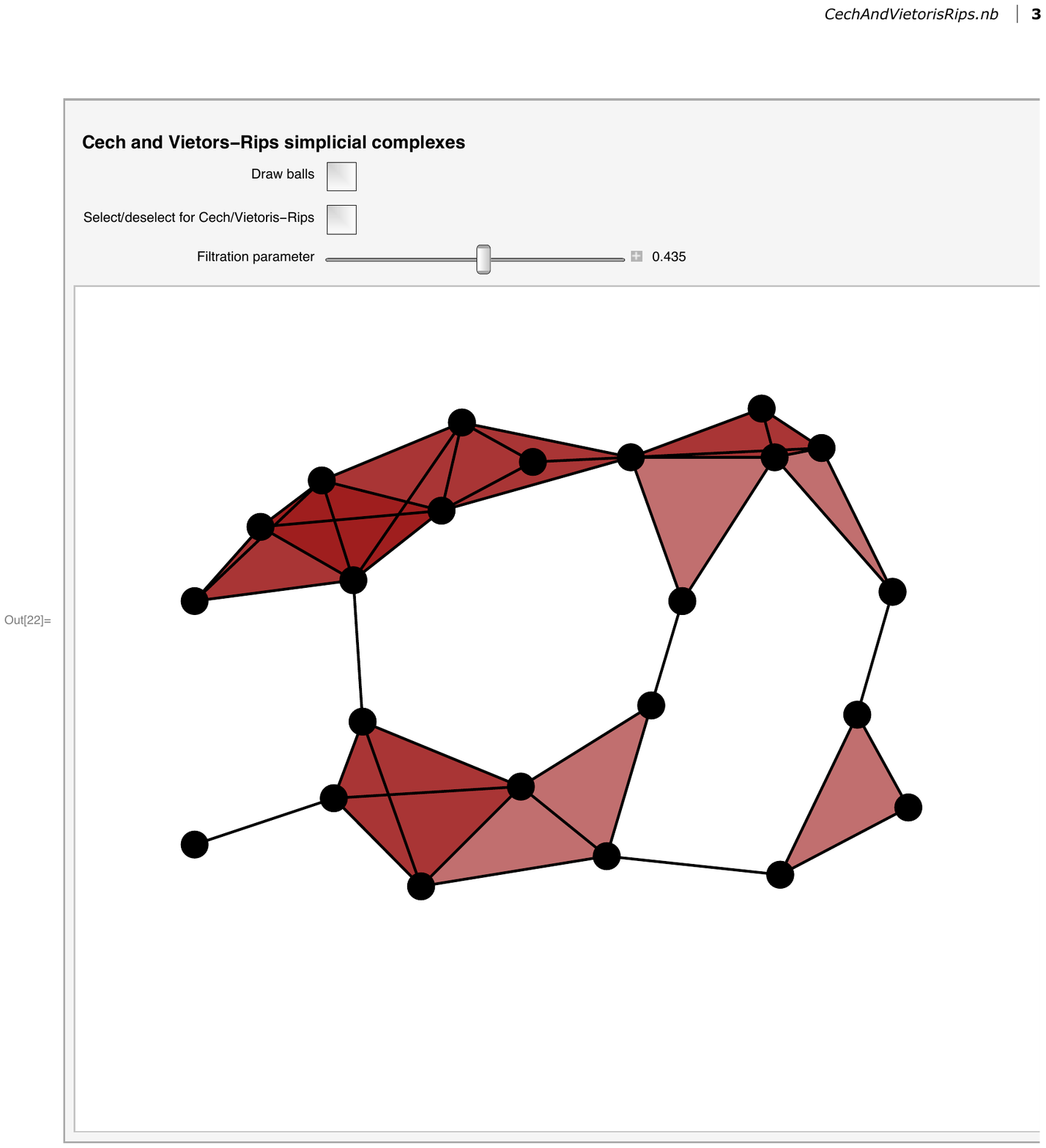}
\caption{An example of a set of data points in $\R^m$ with an associated Vietoris--Rips complex at a fixed scale.}
\label{fig:VRexample}
\end{figure}

It is readily observed that for various data sets, there is not necessarily an ideal choice of the scale parameter so that the associated Vietoris--Rips complex captures the desired features in the data.
The perspective behind persistence is to instead allow the scale parameter to increase and to observe the corresponding appearance and disappearance of topological features.
To be more precise, each hole appears at a certain scale and disappears at a larger scale.
Those holes that persist across a wide range of scales often reflect topological features in the shape underlying the data, whereas the holes that do not persist for long are often considered to be noise.
However, in the context of this paper (estimating fractal dimensions), the holes that do not persist are perhaps better described as measuring the local geometry present in a random finite sample.

For a fixed set of points, we note that as scale increases, simplices can only be added and cannot be removed.
Thus, for $r_0<r_1<r_2<\cdots$, we obtain a filtration of Vietoris--Rips complexes 
\[\vr{X}{r_0}\subseteq \vr{X}{r_1} \subseteq \vr{X}{r_2}\subseteq \cdots.\]
The associated inclusion maps induce linear maps between the corresponding homology groups $H_k(\vr{X}{r_i})$, which are algebraic structures whose ranks (roughly speaking) count the number of independent $k$-dimensional holes in the Vietoris--Rips complex.
A technical remark is that homology depends on the choice of a group of coefficients; it is simplest to use field coefficients (for example $\R$, $\Q$, or $\Z/p\Z$ for $p$ prime), in which case the homology groups are furthermore vector spaces.
The corresponding collection of vector spaces and linear maps is called a \emph{persistent homology module}.

A useful tool for visualizing and extracting meaning from persistent homology is a barcode.
The basic idea is that each generator of persistent homology can be represented by an interval, whose  start and end times are the \emph{birth} and \emph{death} scales of a homological feature in the data.
These intervals can be arranged as a barcode graph in which the $x$-axis corresponds to the scale parameter.
See Figure~\ref{fig:VRBarcodeexample} for an example.
If $Y$ is a finite metric space, then we let $\PH^i(Y)$ denote the corresponding collection of $i$-dimensional persistent homology intervals. Indeed, any persistent homology module decomposes uniquely as a direct sum of interval summands.

\begin{figure}[h]
\includegraphics[width=0.15\textwidth]{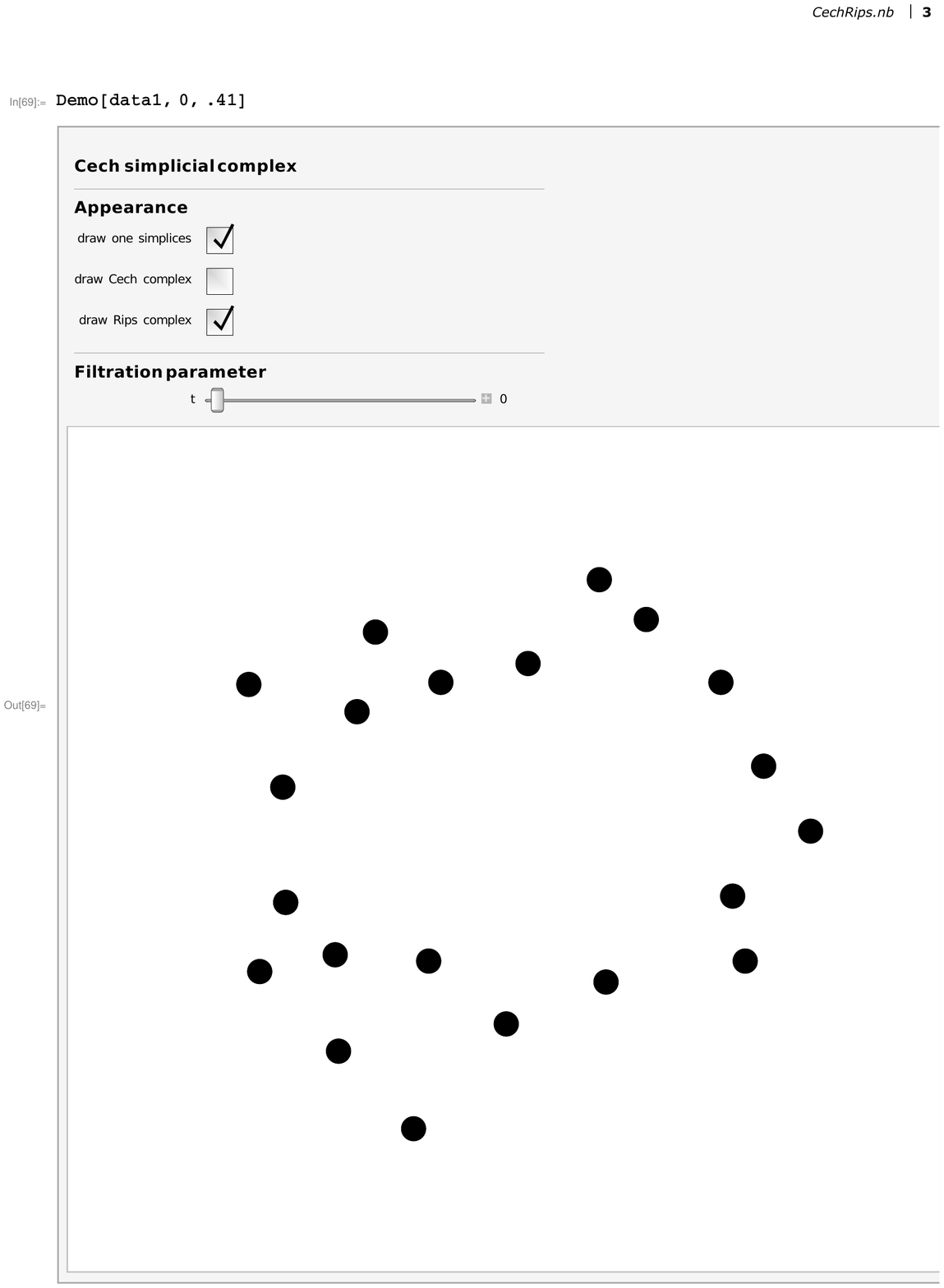}
\includegraphics[width=0.15\textwidth]{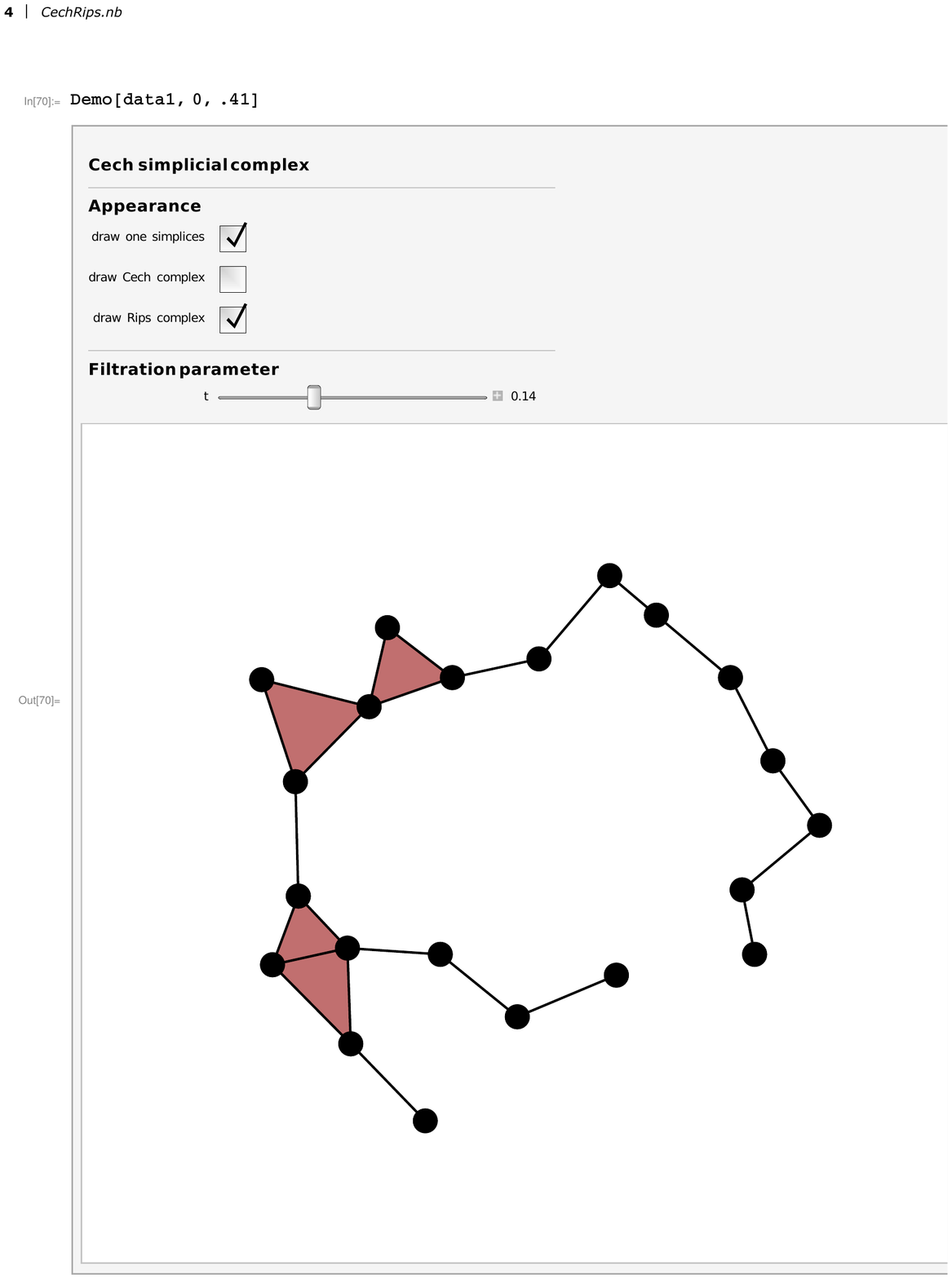}
\includegraphics[width=0.15\textwidth]{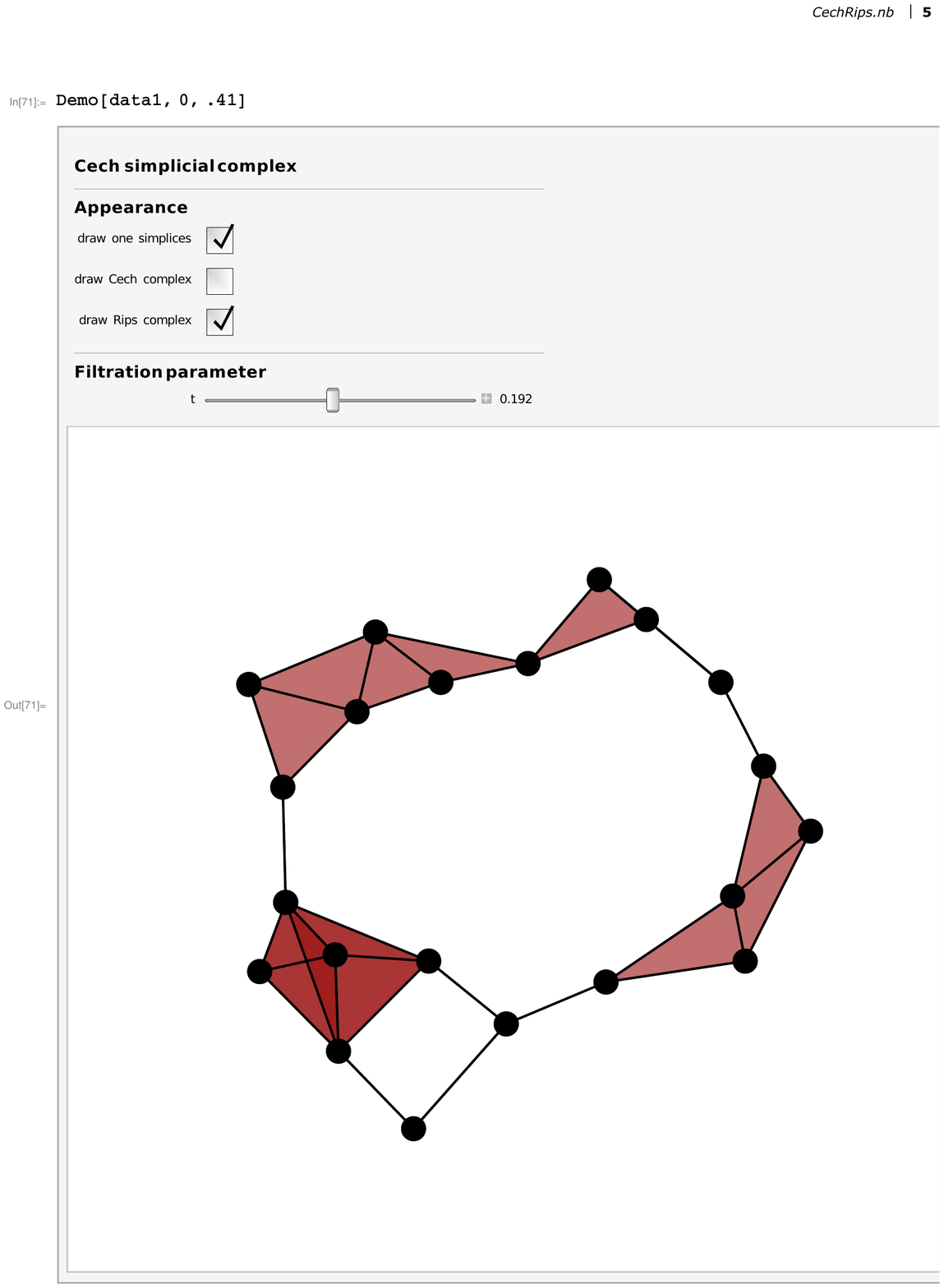}
\includegraphics[width=0.15\textwidth]{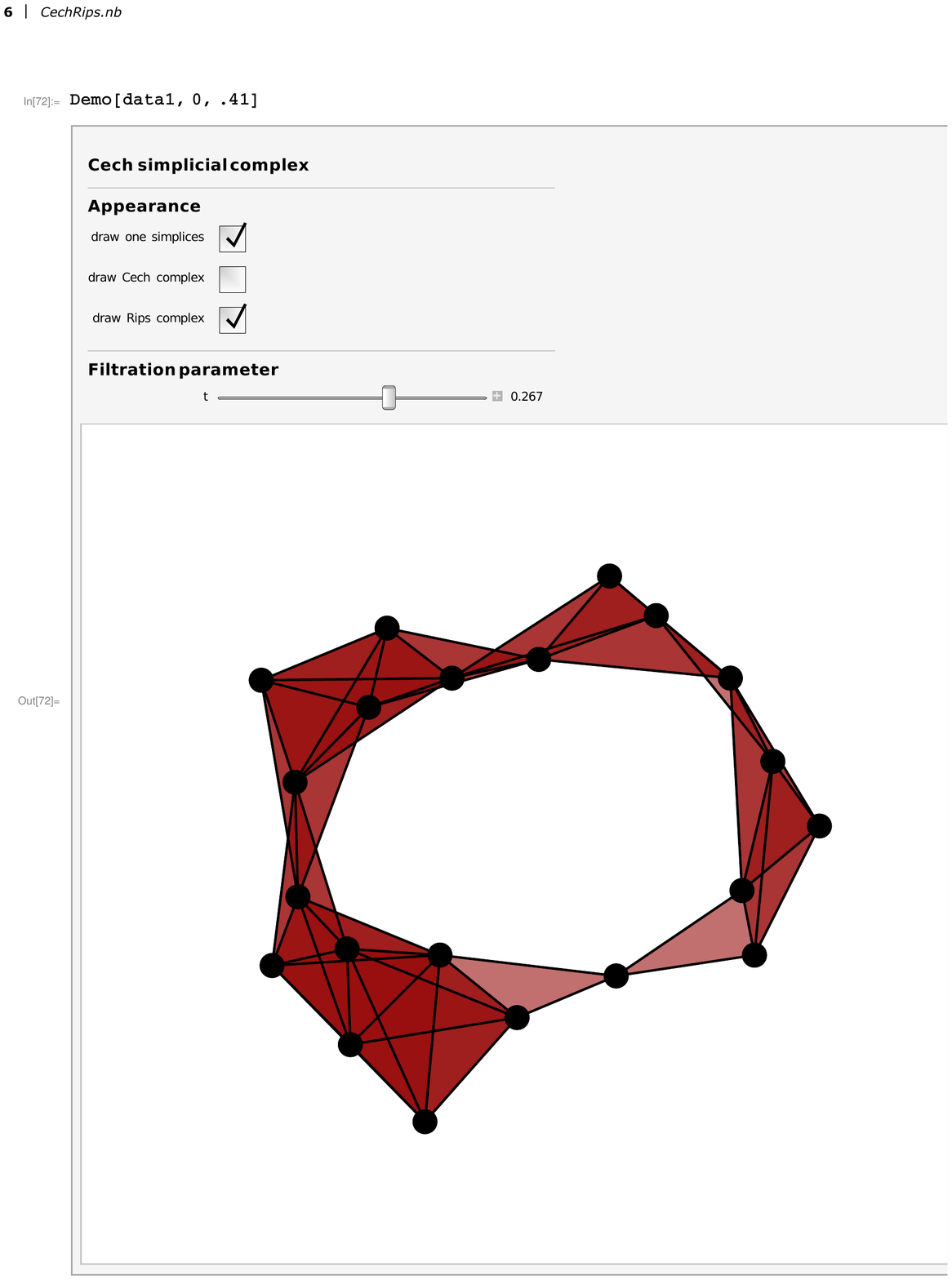}
\includegraphics[width=0.15\textwidth]{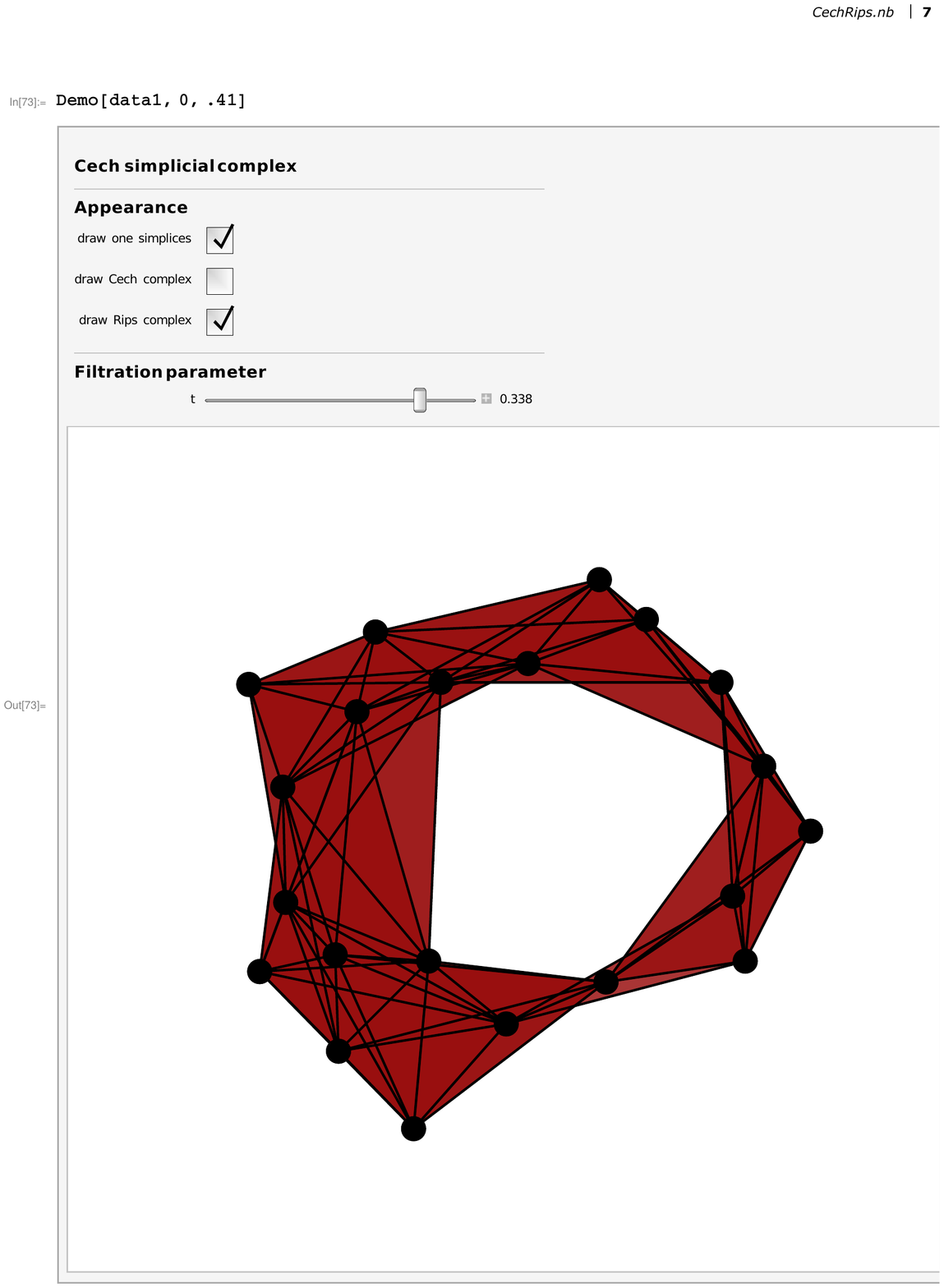}
\includegraphics[width=0.15\textwidth]{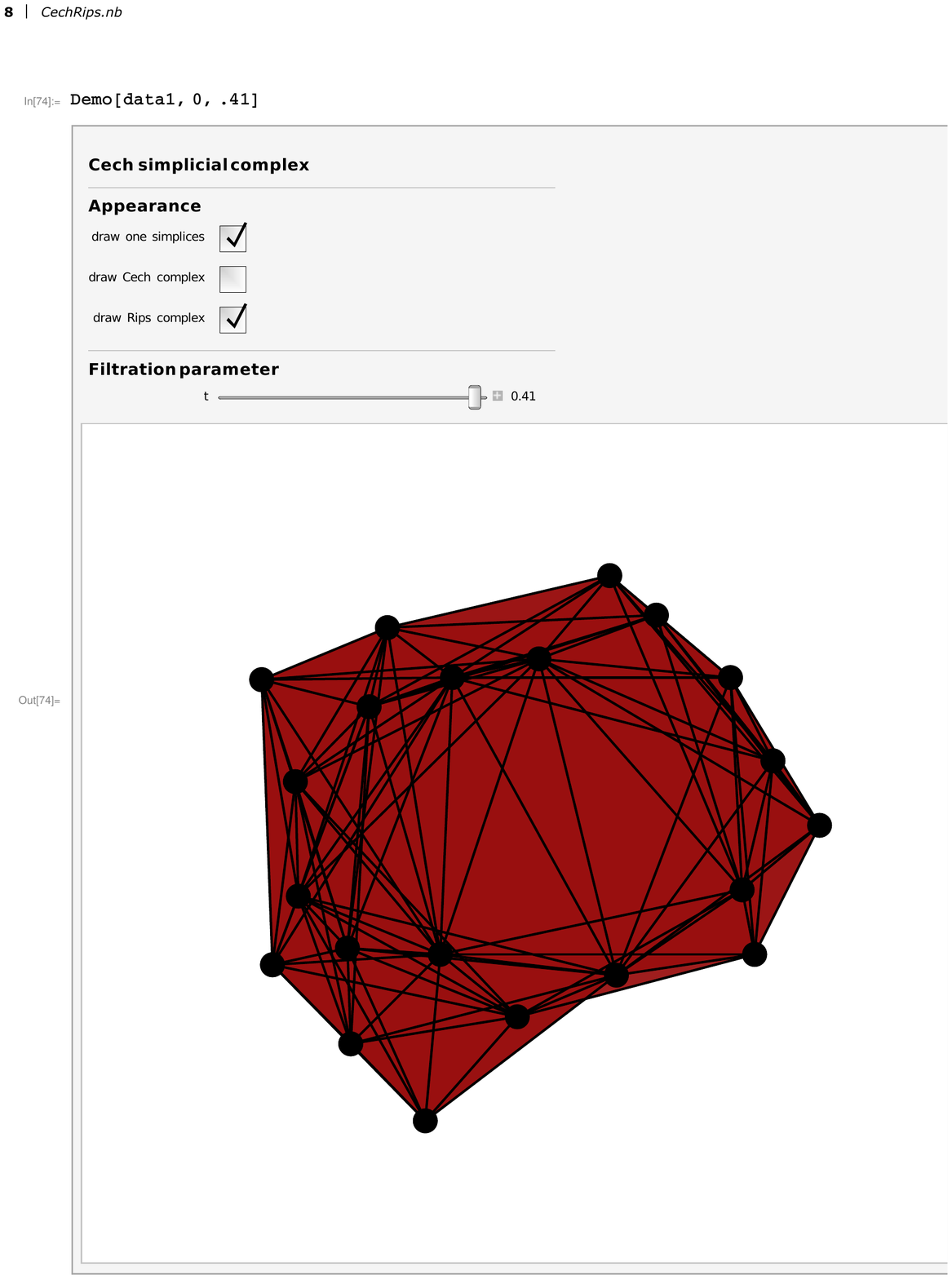}
\includegraphics[width=.85\textwidth]{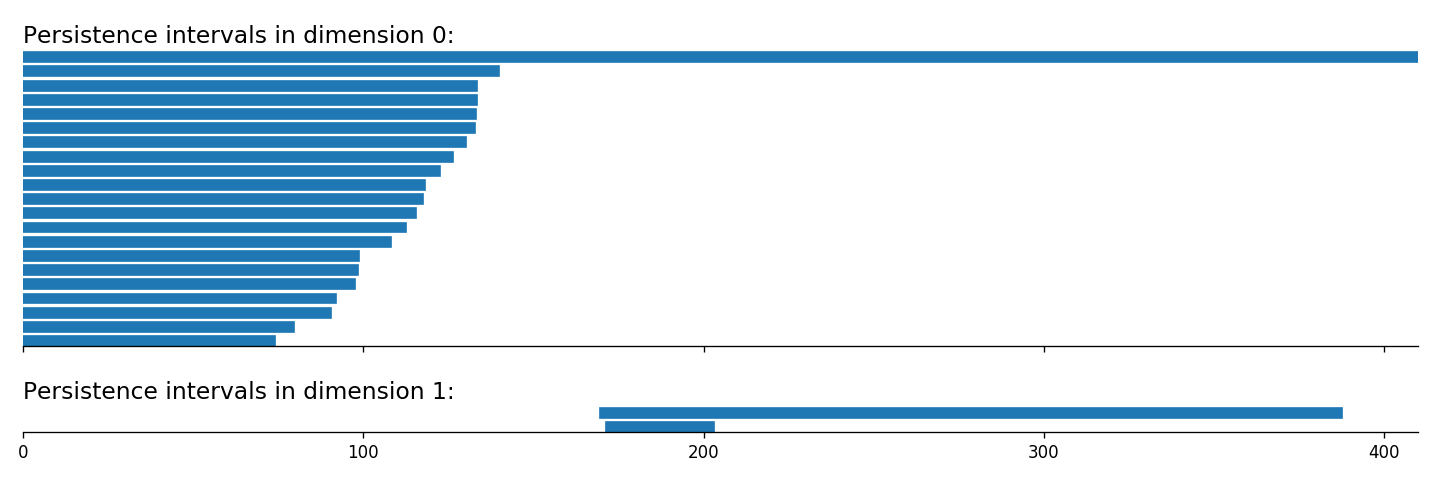}
\caption{An example of Vietoris--Rips complexes at increasing scales, along with associated persistent homology intervals.
The $0$-dimensional persistent homology intervals shows how 21 connected components merge into a single connected component as the scale increases.
The $1$-dimensional persistent homology intervals show two $1$-dimensional holes, one short-lived and the other long-lived.}
\label{fig:VRBarcodeexample}
\end{figure}

Zero-dimensional barcodes always produce one infinite interval, as in Figure~\ref{fig:VRBarcodeexample}, which are problematic for our purposes. Therefore, in the remainder of this paper we will always use reduced homology, which has the effect of simply eliminating the infinite interval from the 0-dimensional barcode while leaving everything else unchanged.
As a consequence, there will never be any infinite intervals in the persistent homology of a Vietoris--Rips simplicial complex, even in homological dimension zero.

\begin{remark}\label{rem:ph0-mst}
It is well-known (see for example~\cite{robins2000computational}) and easy to verify that for any finite metric space $X$, the lengths of the 0-dimensional (reduced) persistent homology intervals of the Vietoris--Rips complex of $X$ correspond exactly to the lengths of the edges in a minimal spanning tree with vertex set $X$.
\end{remark}

\section{Definition of the persistent homology fractal dimension for measures}\label{sec:definition}

Let $X$ be a metric space equipped with a probability measure $\mu$, and let $X_n\subseteq X$ be a random sample of $n$ points from $X$ distributed independently and identically according to $\mu$.
Build a filtered simplicial complex $K$ on top of vertex set $X_n$, for example a Vietoris--Rips complex $\vr{X}{r}$ (Definition~\ref{def:vr}), an intrinsic \v{C}ech complex $\cecha{X}{X}{r}$, or an ambient \v{C}ech complex $\cecha{X}{\R^m}{r}$ if $X$ is a subset of $\R^m$~\cite{ChazalDeSilvaOudot2013}.
Recall that the $i$-dimensional persistent homology of this filtered simplicial complex, which decomposes as a direct sum of interval summands, is denoted by $\PH^i(X_n)$.
We let $L^i(X_n)$ be the sum of the lengths of the intervals in $\PH^i(X_n)$.
In the case of homological dimension zero, the sum $L^0(X_n)$ is simply the sum of all the edge lengths in a minimal spanning tree with $X_n$ as its vertex set (since we are using reduced homology).

\begin{definition}[Persistent homology fractal dimension]\label{def:PH-curve-dimension}
Let $X$ be a metric space equipped with a probability measure $\mu$, let $X_n\subseteq X$ be a random sample of $n$ points from $X$ distributed according to $\mu$, and let $L^i(X_n)$ be the sum of the lengths of the intervals in the $i$-dimensional persistent homology for $X_n$. We define the \emph{$i$-dimensional persistent homology fractal dimension of $\mu$} to be
\[
\dim_\PH^i(\mu)=\inf_{d>0}\ \Bigl\{d~\Big|~\exists\mbox{ constant }C(i,\mu,d)\mbox{ such that }L^i(X_n)\le Cn^{(d-1)/d}\mbox{ with probability one as }n\to\infty\Bigr\}.
\]
\end{definition}

The constant $C$ can depend on $i$, $\mu$, and $d$.
Here ``$L^i(X_n)\le Cn^{(d-1)/d}$ with probability one as $n\to\infty$" means that we have $\lim_{n\to\infty}\bP[L^i(X_n)\le Cn^{(d-1)/d}]=1$.
This dimension may depend on the choices of filtered simplicial complex (say Vietoris--Rips or \v{C}ech), and on the choice of field coefficients for homology computations; for now those choices are suppressed from the definition.

A measure $\mu$ on $X\subseteq \R^m$ is \emph{nonsingular} if the absolutely continuous part of $\mu$ has positive mass.

\begin{proposition}\label{prop:Steele}
Let $\mu$ be a measure on $X\subseteq \R^m$ with $m\ge 2$. Then $\dim_{\PH}^0(\mu)\le m$, with equality if $\mu$ is nonsingular.
%\footnote{\note{For a slight extension of Proposition~\ref{prop:Steele}, how do we furthermore prove that when the absolutely continuous part of $\mu$ has zero mass, we have $\dim_{\PH}^0(\mu)<m$? This may be related to Steele's little-o comment in~\cite{Steele1988}, or to \url{https://en.wikipedia.org/wiki/Euclidean\_minimum\_spanning\_tree\#Expected\_size}.}} 
\end{proposition}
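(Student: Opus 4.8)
The plan is to reduce the entire statement to Steele's theorem via the identification in Remark~\ref{rem:ph0-mst}. Because we work with reduced $0$-dimensional homology, that remark gives $L^0(X_n)=M_n$, the total edge length of a minimal spanning tree on the vertex set $X_n$. Assuming, as Steele does, that $\mu$ is compactly supported, Theorem~1 of~\cite{Steele1988} quoted in~\eqref{eq:Steele} tells us that $M_n/n^{(m-1)/m}\to C_0$ almost surely as $n\to\infty$, where $C_0=C_0(m,\mu)\ge 0$ depends on $m$ and on $\int f^{(m-1)/m}$ with $f$ the density of the absolutely continuous part of $\mu$. Almost-sure convergence is stronger than the mode of convergence appearing in Definition~\ref{def:PH-curve-dimension}, so the whole argument amounts to reading off the two desired inequalities from this single limit.

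For the upper bound $\dim_{\PH}^0(\mu)\le m$, I would take $d=m$ in the definition and fix any constant $C'>C_0$. Since $M_n/n^{(m-1)/m}\to C_0$ almost surely, the event $\{M_n\le C' n^{(m-1)/m}\}$ holds for all sufficiently large $n$ almost surely, so $\bP[L^0(X_n)\le C' n^{(m-1)/m}]\to 1$. Thus $d=m$ lies in the set over which the infimum in Definition~\ref{def:PH-curve-dimension} is taken, and the infimum is at most $m$. Note this step does not use nonsingularity: even in the degenerate case $C_0=0$ of the footnote to~\eqref{eq:Steele}, any positive $C'$ works.

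For the reverse inequality under nonsingularity, the key observation is that $C_0>0$. If $\mu$ is nonsingular then its absolutely continuous part has positive mass, so $f$ is not almost-everywhere zero and $\int f^{(m-1)/m}>0$, whence $C_0>0$. Now suppose $d<m$. Since $t\mapsto (t-1)/t=1-1/t$ is strictly increasing, we have $(d-1)/d<(m-1)/m$, so $n^{(d-1)/d}/n^{(m-1)/m}\to 0$. Consequently, for any candidate constant $A>0$,
\[
\frac{A\,n^{(d-1)/d}}{M_n}=\frac{A}{M_n/n^{(m-1)/m}}\cdot\frac{n^{(d-1)/d}}{n^{(m-1)/m}}\longrightarrow \frac{A}{C_0}\cdot 0=0\quad\text{almost surely.}
\]
Hence $A\,n^{(d-1)/d}<M_n=L^0(X_n)$ for all large $n$ almost surely, so $\bP[L^0(X_n)\le A\,n^{(d-1)/d}]\to 0$ rather than $1$. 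Therefore no $d<m$ belongs to the infimum set, giving $\dim_{\PH}^0(\mu)\ge m$, and together with the upper bound $\dim_{\PH}^0(\mu)=m$.

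The computation is routine once Steele's theorem and Remark~\ref{rem:ph0-mst} are in hand; the only genuine subtlety is matching the asymptotic relation $\sim$ in~\eqref{eq:Steele} to the precise probabilistic formulation ``with probability one as $n\to\infty$'' (that is, $\lim_{n\to\infty}\bP[\cdots]=1$) of Definition~\ref{def:PH-curve-dimension}. This is exactly what the almost-sure (indeed complete) nature of Steele's convergence resolves, since almost-sure convergence forces the relevant event probabilities to tend to $1$ for the upper bound and to $0$ for the lower bound. A secondary point worth flagging is the compactness hypothesis: Steele's theorem is stated for compactly supported $\mu$, so strictly the proposition should be read with $X$ compact (equivalently, $\mu$ compactly supported), matching the setting described in the abstract.
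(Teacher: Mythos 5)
Your proof is correct and follows essentially the same route as the paper: both reduce the statement to Steele's minimal spanning tree asymptotics via the identification $L^0(X_n)=M_n$ and then read off the exponent bound from the almost-sure limit $M_n/n^{(m-1)/m}\to C_0$. In fact your write-up is more complete than the paper's, which cites Steele's Theorem~2 and explicitly verifies only the upper bound $\dim_{\PH}^0(\mu)\le m$, leaving the equality case implicit (your argument that nonsingularity forces $C_0>0$, so that for any $d<m$ and any constant $A$ the event $L^0(X_n)\le A\,n^{(d-1)/d}$ has probability tending to $0$, is exactly the missing half); your remark that the proposition tacitly requires $\mu$ to have compact support, as in Steele's hypotheses and the paper's abstract, is also a fair point the paper glosses over.
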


\begin{proof}
By Theorem~2 of~\cite{Steele1988}, we have that $\lim_{n\to\infty}n^{-(m-1)/m}L^0(X_n)=c\int_{\R^m}f(x)^{(m-1)/m}\ dx$, where $c$ is a constant depending on $m$, and where $f$ is the absolutely continuous part of $\mu$.
To see that $\dim_{\PH}^0(\mu)\le m$, note that
\[ L^0(X_n)\le \left(c\int_{\R^m}f(x)^{(m-1)/m}\ dx+\varepsilon\right)n^{(m-1)/m} \]
with probability one as $n\to\infty$ for any $\varepsilon>0$.
\end{proof}

We conjecture that the $i$-dimensional persistent homology of compact subsets of $\R^m$ have the same scaling properties as the functionals in~\cite{Steele1988,yukich2006probability}.

\begin{conjecture}\label{conj:PH-scaling}
Let $\mu$ be a probability measure on a compact set $X\subseteq \R^m$ with $m\ge 2$, and let $\mu$ be nonsingular.
Then for all $0\le i<m$, there is a constant $C\ge 0$ (depending on $\mu$, $m$, and $i$) such that $L^i(X_n)=Cn^{(m-1)/m}$ with probability one as $n\to\infty$.
\end{conjecture}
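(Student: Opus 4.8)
The plan is to treat $L^i$ as a \emph{subadditive Euclidean functional} in the sense of Steele, Beardwood--Halton--Hammersley, and Yukich, and to invoke an umbrella theorem of the kind discussed in Section~\ref{ss:umbrella}. The first observation is that $L^i$ has the two easy structural properties such theorems require: it is translation invariant, being built only from pairwise distances, and it is homogeneous of order $1$, since scaling a point set by $a>0$ scales every filtration value---hence every birth, death, and interval length---by $a$, giving $L^i(aX_n)=a\,L^i(X_n)$. Order-$1$ homogeneity is precisely what forces the exponent $(m-1)/m$ predicted in the statement, matching the $0$-dimensional (minimal spanning tree) case of Proposition~\ref{prop:Steele}.

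Granting the functional axioms, I would reduce to the model case of the uniform measure on the unit cube. The umbrella machinery lets one pass from the cube to an arbitrary nonsingular $\mu$: if $n^{-(m-1)/m}L^i(X_n)\to c_{m,i}$ for $X_n$ uniform on $[0,1]^m$, then for general $\mu$ with absolutely continuous part of density $f$ one obtains convergence to $c_{m,i}\int_{\R^m} f(x)^{(m-1)/m}\,dx$, which is positive exactly when $\mu$ is nonsingular and reproduces the vanishing constant for purely singular $\mu$ noted in the footnote to \eqref{eq:Steele}. So it suffices to (a) establish the axioms and (b) prove existence of the limit $c_{m,i}$ on the cube.

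The technical heart is verifying geometric subadditivity and superadditivity. Partition $[0,t]^m$ into $k^m$ congruent subcubes $Q_j$. Subadditivity asks that
\[
L^i\!\left(X\cap[0,t]^m\right)\ \le\ \sum_j L^i\!\left(X\cap Q_j\right)\ +\ C\,t\,k^{m-1},
\]
together with a matching reverse inequality, each with a boundary error of homogeneity $1$ scaling like the total $(m-1)$-dimensional area $t\,k^{m-1}$ of the cut. For $i=0$ these are the classical spanning-tree estimates (connect the subtrees with $O(k^{m-1})$ short edges). For $i\ge 1$ this is the main obstacle: $i$-dimensional cycles are global objects that need not localize to a single subcube, so the total $i$-persistence of the union is not additive over the $Q_j$ in any elementary way. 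My approach would be to control the discrepancy by stability of persistence---interleave the Vietoris--Rips filtration of $X\cap[0,t]^m$ with the filtration of the disjoint union $\bigsqcup_j(X\cap Q_j)$, bound the interleaving by the geometry near the cut faces, and show via a stability/Mayer--Vietoris argument that only cycles meeting the boundary region can change, contributing total length $O(t\,k^{m-1})$. For superadditivity I would not expect $L^i$ itself to suffice (just as the spanning-tree functional fails), and would instead introduce a boundary-modified functional---adjoining the cut faces as ``free'' filled-in regions---prove it is superadditive, and show it is asymptotically equivalent to $L^i$.

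Finally, to upgrade convergence of expectations to the ``probability one'' statement, I would establish a smoothness (bounded-difference) estimate: by stability of the persistence diagram under insertion of a single point, adding one point to $X_n$ changes $L^i(X_n)$ by a controlled amount, which feeds an Azuma--Hoeffding or Efron--Stein inequality and yields almost-sure convergence. I expect the subadditivity/superadditivity step for $i\ge1$ to be by far the hardest part: unlike the edges of a spanning tree, high-dimensional persistent cycles resist the localization on which the umbrella machinery is built, and making the boundary bookkeeping rigorous is exactly the content recently supplied by Divol and Polonik~\cite{divol-polonik} for the cube.
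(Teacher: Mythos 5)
The statement you are trying to prove is Conjecture~\ref{conj:PH-scaling}: the paper does not prove it, explicitly leaves it open, and notes that only special cases are known --- Divol and Polonik~\cite{divol-polonik} for the cube (with a remark extending to convex bodies), and~\cite{schweinhart2018weighted} for balls and spheres. Your proposal follows exactly the route the paper itself suggests after the conjecture (the subadditivity/superadditivity machinery of~\cite{Steele1988,yukich2006probability}, Section~\ref{ss:umbrella}), which is sensible, but what you have written is a research program, not a proof: every step that is easy is carried out, and every step that is hard is deferred. Concretely, three gaps remain. First, the geometric subadditivity and superadditivity of $L^i$ for $i\ge 1$ --- which you correctly identify as the heart of the matter --- is only described in aspiration. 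The stability theorem you invoke bounds the \emph{bottleneck} distance between persistence diagrams, an $L^\infty$-type statement; the functional $L^i$ is an $L^1$-type quantity (a sum of interval lengths), so interleaving arguments alone do not control it: cycles created or destroyed near the cut faces can be numerous, and one must bound their \emph{number times length}, not just how far each interval moves. This is precisely the bookkeeping that~\cite{divol-polonik} had to supply for the cube, and your sketch does not reproduce it. Second, your concentration step is wrong as stated: the claim that inserting a single point changes $L^i(X_n)$ by a ``controlled amount'' suitable for Azuma--Hoeffding is not known for $i\ge 1$ (unlike the minimal spanning tree case, where bounded-degree arguments apply); one new point can alter many simplices and many intervals at once, and bounding the add-one cost is itself one of the hard technical points. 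Third, the passage from the uniform cube to a general nonsingular $\mu$ on an \emph{arbitrary} compact $X\subseteq\R^m$, yielding the constant $c_{m,i}\int f^{(m-1)/m}$, requires verifying additional umbrella-theorem axioms (smoothness, continuity, closeness in mean of the boundary functional) for $L^i$; this is not automatic, and indeed even granting the Divol--Polonik cube result, the conjecture in the generality stated here remains open. In short: your strategy matches the one the paper proposes, but none of the genuinely difficult steps is established, so the proposal cannot be accepted as a proof.
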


Let $\mu$ be a probability measure with compact support that is absolutely continuous with respect to Lebesgue measure in $\R^m$ for $m\ge 2$.
Note that Conjecture~\ref{conj:PH-scaling} would imply that the persistent homology fractal dimension of $\mu$ is equal to $m$.
The tools of subadditivity and superadditivity behind the umbrella theorems for Euclidean functionals, as described in~\cite{yukich2006probability} and Section~\ref{ss:umbrella}, may be helpful towards proving this conjecture.
In some cases, for example when $X$ is a cube or ball (or more generally convex), then versions of Conjecture~\ref{conj:PH-scaling} are proven in~\cite{divol-polonik,schweinhart2018weighted}.

One could alternatively define birth-time (for $i>0$) or death-time fractal dimensions by replacing $L^i(X_n)$ with the sum of the birth times, or alternatively the sum of the death times, in the persistent homology barcodes $\PH^i(X_n)$.

\section{Experiments}\label{sec:experiments}

A feature of Definition~\ref{def:PH-curve-dimension} is that we can use it to estimate the persistent homology fractal dimension of a measure $\mu$.
Indeed, suppose we can sample from $X$ according to the probability distribution $\mu$.
We can therefore sample collections of points $X_n$ of size $n$, compute the statistic $L^i(X_n)$, and then plot the results in a log-log fashion as $n$ increases.
In the limit as $n$ goes to infinity, we expect the plotted points to be well-modeled by a line of slope $\frac{d-1}{d}$, where $d$ is the $i$-dimensional persistent homology fractal dimension of $\mu$.
In many of the experiments in this section, the measures $\mu$ are simple enough (or self-similar enough) that we would expect the persistent homology fractal dimension of $\mu$ to be equal to the Hausdorff dimension of $\mu$. 

In our computational experiments, we have used the persistent homology software packages Ripser~\cite{bauer2017ripser}, Javaplex~\cite{Javaplex}, and code from Duke (see the acknowledgements in Section~\ref{sec:acknowledgements}).
For the case of $0$-dimensional homology, we can alternatively use well-known algorithms for computing minimal spanning trees, such as Kruskal's algorithm or Prim's algorithm~\cite{kruskal1956shortest,prim1957shortest}. We estimate the slope of our log-log plots (of $L^i(X_n)$ as a function of $n$) using both a line of best fit, and alternatively a technique designed to approximate the asymptotic scaling described in Section~\ref{sec:asymptotic}.
Our code is publicly available at \url{https://github.com/CSU-PHdimension/PHdimension}.

\subsection{Estimates of persistent homology fractal dimensions}

We display several experimental results, for shapes of both integral and non-integral fractal dimension.
In Figure~\ref{fig:2D}, we show the log-log plots of $L^i(X_n)$ as a function of $n$, where $X_n$ is sampled uniformly at random from a disk, a square, and an equilateral triangle, each of unit area in the plane $\R^2$.
Each of these spaces constitutes a manifold of dimension two, and we thus expect these shapes to have persistent homology fractal dimension $d=2$ as well. Experimentally, this appears to be the case, both for homological dimensions $i=0$ and $i=1$.
Indeed, our asymptotically estimated slopes lie in the range $0.49$ to $0.54$, which is fairly close to the expected slope of $\frac{d-1}{d}=\frac{1}{2}$.

In Figure~\ref{fig:cube} we perform a similar experiment for the cube in $\R^3$ of unit volume.
We expect the cube to have persistent homology fractal dimension $d=3$, corresponding to a slope in the log-log plot of $\frac{d-1}{d}=\frac{2}{3}$.
This appears to be the case for homological dimension $i=0$, where the slope is approximately $0.65$.
However, for $i=1$ and $i=2,$ our estimated slope is far from $\frac{2}{3}$, perhaps because our computational limits do not allow us to take $n$, the number of randomly chosen points, to be sufficiently large.

In Figure~\ref{fig:SierpinskiEtc} we use log-log plots to estimate some persistent homology fractal dimensions of the Cantor set cross the interval (expected dimension $d=1+\log_3(2)$), of the Sierpi\'{n}ski triangle (expected dimension $d=\log_2(3)$), of Cantor dust in $\R^2$ (expected dimension $d=\log_3(4)$), and of Cantor dust in $\R^3$ (expected dimension $d=\log_3(8)$). As noted in Section~\ref{sec:preliminaries}, various notions of fractal dimension tend to agree for well-behaved fractals. Thus, in each case above, we provide the Hausdorff dimension $d$ in order to define an expected persistent homology fractal dimension. The Hausdorff dimension is well-known for the Sierpi\'{n}ski triangle, Cantor dust in $\R^2$, and Cantor dust in $\R^3.$ The Hausdorff dimension for the Cantor set cross the interval can be shown to be $1+\log_3(2),$ which follows from \cite[Theorem~9.3]{falconer2004fractal} or~\cite[Theorem~III]{moran1946additive}. In Section~\ref{ss:random-fractal} we define these fractal shapes in detail, and we also explain our computational technique for sampling points from them at random.
%\note{Regarding the Hausdorff dimension of the Cantor set cross the interval, Theorem~9.3 of Falconer says that the fractal dimension must be the value $s$ such that $\sum_{i=1}^6(\frac{1}{3})^s=1$, and solving for $s$ we get $3^s=6$ or $s=\log_{3}(3)+\log_3(2)=1+\log_3(2)$. Here we choose $V$ to be the open unit square $V=(0,1)\times (0,1)$. Then $V$ satisfies the open set condition: $V\supset \cup_{i=1}^6S_i(V)$.}

Summarizing the experimental results for self-similar fractals, we find reasonably good estimates of fractal dimension for homological dimension $i=0.$ More specifically, for the Cantor set cross the interval, we expect $\frac{d-1}{d}\approx 0.3869$, and we find slope estimates from a linear fit of all data and an asymptotic fit to be $0.3799$ and $0.36488$, respectively. In the case of the Sierpi\'{n}ski triangle, the estimate is quite good:  we expect $\frac{d-1}{d}\approx 0.3691$, and the slope estimates from both a linear fit and an asymptotic fit are approximately $0.37$. Similarly, the estimates for Cantor dust in $\mathbb{R}^2$ and $\mathbb{R}^3$ are close to the expected values: (1) For Cantor dust in $\mathbb{R}^2$, we expect $\frac{d-1}{d}\approx 0.2075$ and estimate $\frac{d-1}{d}\approx 0.25$. (2) For Cantor dust in $\mathbb{R}^3$, we expect $\frac{d-1}{d}\approx 0.4717$ and estimate $\frac{d-1}{d}\approx 0.49$. For $i>0$ many of these estimates of the persistent homology fractal dimension are not close to the expected (Hausdorff) dimensions, perhaps because the number of points $n$ is not large enough. The theory behind these experiments has now been verified  in~\cite{schweinhart2018persistent-3}.

It is worth commenting on the Cantor set, which is a self-similar fractal in $\R$.
Even though the Hausdorff dimension of the Cantor set is $\log_3(2)$, it is not hard to see that the $0$-dimensional persistent homology fractal dimension of the Cantor set is $1$.
This is because as $n\to\infty$ a random sample of points from the Cantor set will contain points in $\R$ arbitrarily close to 0 and to 1, and hence $L_0(X_n)\to 1$ as $n\to\infty$.
This is not surprising---we do not necessarily expect to be able to detect a fractional dimension less than one by using minimal spanning trees (which are $1$-dimensional graphs).
For this reason, if a measure $\mu$ is defined on a subset of $\R^m$, we sometimes restrict attention to the case $m\ge2$.
See Figure~\ref{fig:Cantor} for our experimental computations on the Cantor set.

Finally, we include one example with data drawn from a two-dimensional manifold in $\R^3$. We sample points from a torus with major radius 5 and minor radius 3. We expect the persistent homology fractal dimensions to be 2, and this is supported in the experimental evidence for 0-dimensional homology shown in Figure~\ref{fig:torus} with approximate slope $\frac{d-1}{d}=\frac{1}{2}$. 

\begin{figure}[h]
\includegraphics[width=0.44\textwidth]{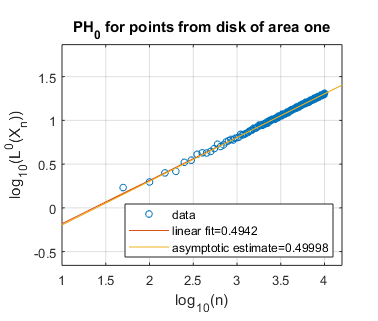}
\includegraphics[width=0.44\textwidth]{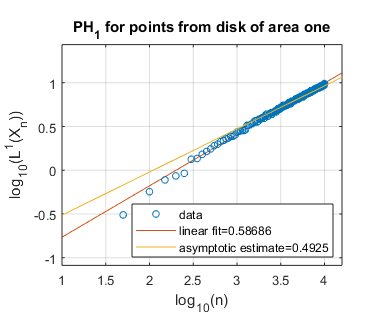}
\includegraphics[width=0.44\textwidth]{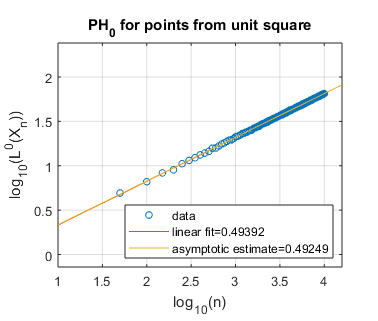}
\includegraphics[width=0.44\textwidth]{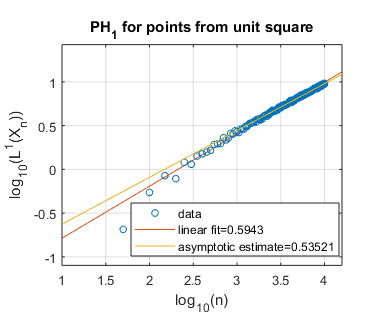}
\includegraphics[width=0.44\textwidth]{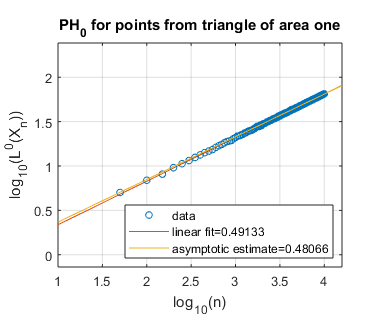}
\includegraphics[width=0.44\textwidth]{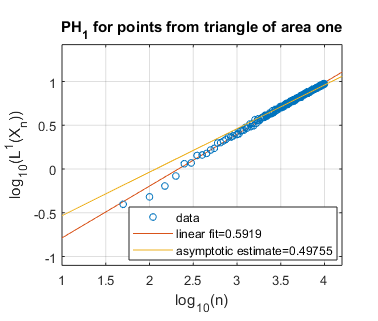}
\caption{Log scale plots and slope estimates of the number $n$ of sampled points versus $L^0(X_n)$ (left) or $L^1(X_n)$ (right).
Subsets $X_n$ are drawn uniformly at random from (top) the unit disk in $\R^2$, (middle) the unit square, and (bottom) the unit triangle. All cases have slope estimates close to $\frac{d-1}{d}=\frac{1}{2}$, which is consistent with the expected dimension. The asymptotic scaling estimates of the slope are computed as described in Section~\ref{sec:asymptotic}.
%\newline \note{$x$-axis label: lowercase $n$?}
%The estimated lines of best fit for homological dimensions 0 and 1 have slopes that are approximately $0.5.$ Thus we estimate $d=2$ from $\frac{1}{2}=\frac{d-1}{d},$ which is consistent with the expected dimension.
}
\label{fig:2D}
\end{figure}

\begin{figure}[htb]
\includegraphics[width=0.495\textwidth]{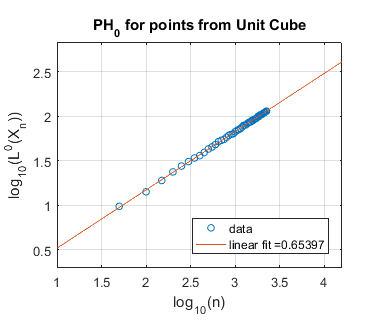}
\includegraphics[width=0.495\textwidth]{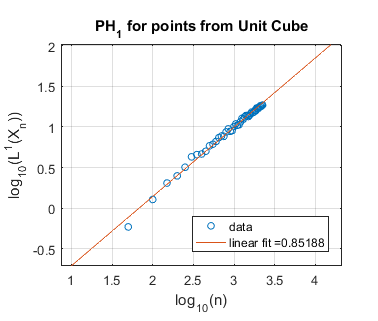}
\includegraphics[width=0.495\textwidth]{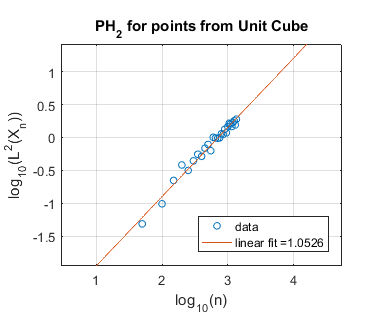}
\caption{Log scale plots of the number $n$ of sampled points from the cube versus $L^0(X_n)$ (left), $L^1(X_n)$ (right), and $L^2(X_n)$ (bottom). The dimension estimate from 0-dimensional persistent homology is reasonably close to the expected slope $\frac{d-1}{d}=\frac{2}{3}$, while the 1- and 2-dimensional cases are less accurate, likely due to computational limitations.
%\newline \note{Asymptotic fitting code not run.}
% Bigger font size for axis labels?
}
\label{fig:cube}
\end{figure}

\begin{figure}[htb]
\includegraphics[width=0.36\textwidth]{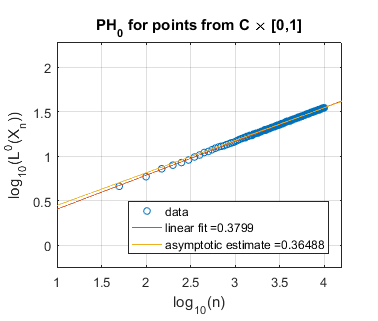}
\includegraphics[width=0.36\textwidth]{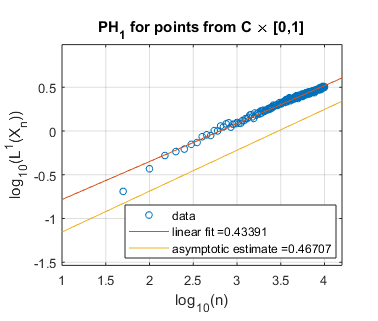} 
\includegraphics[width=0.36\textwidth]{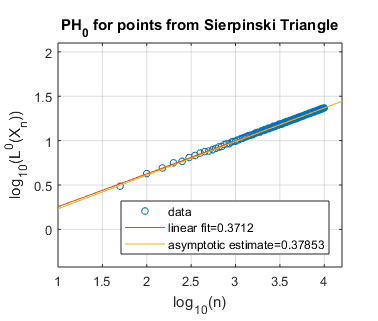}
\includegraphics[width=0.36\textwidth]{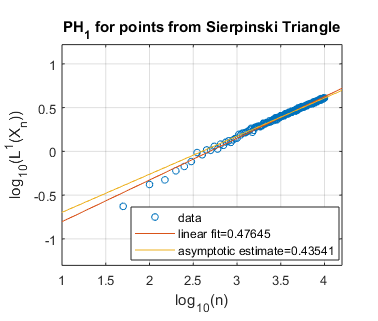}
\includegraphics[width=0.36\textwidth]{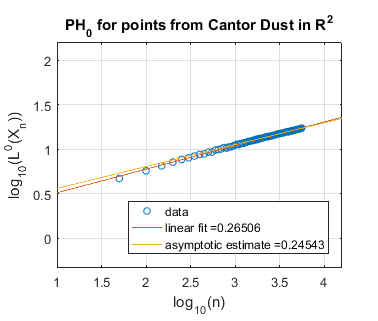}
\includegraphics[width=0.36\textwidth]{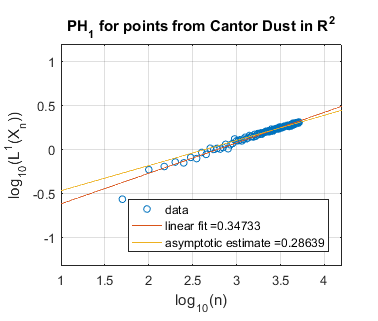}\\
\includegraphics[width=0.325\textwidth]{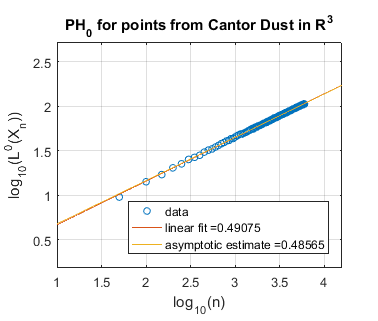}
\includegraphics[width=0.325\textwidth]{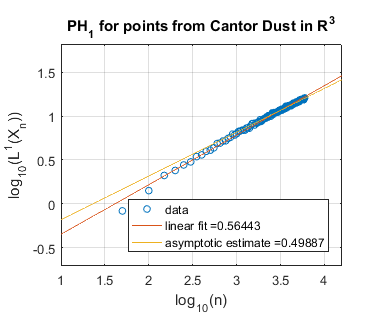}
\includegraphics[width=0.325\textwidth]{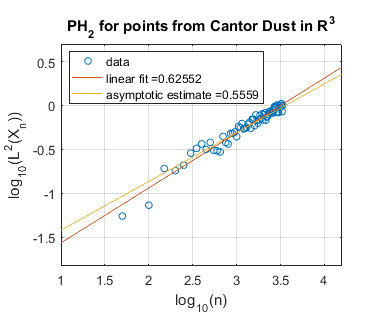}
\caption{(Top row) Cantor set cross unit interval; expected slope $\frac{d-1}{d}\approx 0.3869$. %computed with Ripser.
(Second row) Sierpi\'{n}ski triangle; expected slope $\frac{d-1}{d}\approx 0.3691$. %computed with Ripser.
(Third row) Cantor dust in $\mathbb{R}^2$; expected slope $\frac{d-1}{d}\approx 0.2075$. % computed with Ripser.
(Bottom row) Cantor dust in $\mathbb{R}^3$; expected slope $\frac{d-1}{d}\approx 0.4717$. % computed with Ripser.
The 0-dimensional estimates are close to the expected dimensions. The higher-dimensional estimates are not as accurate, perhaps due to computational limitations.
%Memory limitations restrict the number of points at which persistent homology can be computed for some examples. 
%Lines of best fit are included when data suggests that $n$ is sufficiently large for meaningful slope estimation.
}
\label{fig:SierpinskiEtc}
\end{figure}

\begin{figure}[htb]
\includegraphics[width=0.5\textwidth]{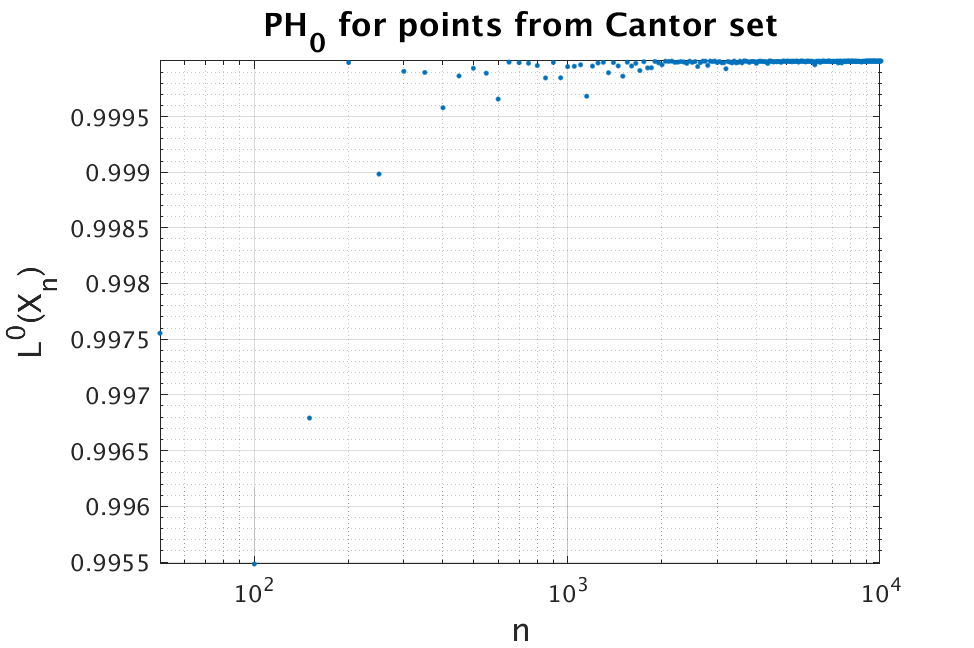}
\caption{Log scale plot of the number $n$ of sampled points from the Cantor set versus $L^0(X_n).$
Note that $L^0(X_n)$ approaches one, as expected.}
\label{fig:Cantor}
\end{figure}

\begin{figure}[htb]
\includegraphics[width=0.5\textwidth]{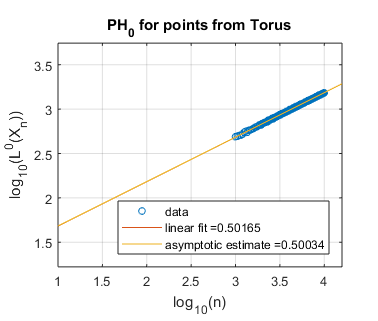}
\caption{Log scale plot of the number $n$ of sampled points from a torus with major radius 5 and minor radius 3 versus $L^0(X_n).$ Estimated lines of best fit from $L^0(X_n)$ have slope approximately equal to $\frac{1}{2}$, recovering $\frac{d-1}{d}$ for a dimension estimate of $d=2$. We restrict to 0-dimensional homology in this setting due to computational limitations.}
\label{fig:torus}
\end{figure}

\FloatBarrier

\subsection{Randomly sampling from self-similar fractals}\label{ss:random-fractal}

The Cantor set $C=\cap_{l=0}^\infty C_l$ is a countable intersection of nested sets $C_0\supseteq C_1\supseteq C_2\supseteq\cdots$, where the set $C_l$ at level $l$ is a union of $2^l$ closed intervals, each of length $\frac{1}{3^l}$.
More precisely, $C_0=[0,1]$ is the closed unit interval, and $C_l$ is defined recursively via
\[ C_l=\frac{C_{l-1}}{3}\cup\left(\frac{2}{3}+\frac{C_{l-1}}{3}\right)\quad\mbox{for }l\ge 1.\]
In our experiment for the Cantor set (Figure~\ref{fig:Cantor}), we do not sample from the Cantor distribution on the entire Cantor set $C$, but instead from the left endpoints of level $C_l$ of the Cantor set, where $l$ is chosen to be very large (we use $l=100{,}000$).
More precisely, in order to sample points, we choose a binary sequence $\{a_i\}_{i=1}^l$ uniformly at random, meaning that each term $a_i$ is equal to either $0$ or $1$ with probability $\frac{1}{2}$, and furthermore the value $a_i$ is independent from the value of $a_j$ for $i\neq j$.
The corresponding random point in the Cantor set is $\sum_{i=1}^l\frac{2a_i}{3^i}$.
Note that this point is in $C$ and furthermore is the left endpoint of some interval in $C_l$.
So we are selecting left endpoints of intervals in $C_l$ uniformly at random, but since $l$ is large this is a good approximation to sampling from the entire Cantor set according to the Cantor distribution.

We use a similar procedure to sample at random for our experiments on the Cantor set cross the interval, on Cantor dust in $\R^2$, on Cantor dust in $\R^3$, and on the Sierpi\'{n}ski triangle (Figure~\ref{fig:SierpinskiEtc}).
The Cantor set cross the interval is $C\times [0,1]\subseteq \R^2$, equipped with the Euclidean metric.
We computationally sample by choosing a point from $C_l$ as described in the paragraph above for $l=100{,}000$, and by also sampling a point from the unit interval $[0,1]$ uniformly at random.
Cantor dust is the subset $C\times C$ of $\R^2$, which we sample by choosing two points from $C_l$ as described previously.
The same procedure is done for the Cantor dust $C\times C\times C$ in $\R^3$.
The Sierpi\'{n}ski triangle $S\subseteq \R^2$ is defined in a similar way to the Cantor set, with $S=\cap_{l=0}^\infty S_l$ a countable intersection of nested sets $S_0\supseteq S_1\supseteq S_2\supseteq\cdots$.
Here each $S_l$ is a union of $3^l$ triangles.
We choose $l=100{,}000$ to be large, and then sample points uniformly at random from the bottom left endpoints of the triangles in $S_l$.
More precisely, we choose a ternary sequence $\{a_i\}_{i=1}^l$ uniformly at random, meaning that each term $a_i$ is equal to either $0$, $1$, or $2$ with probability $\frac{1}{3}$.
The corresponding random point in the Sierpi\'{n}ski triangle is $\sum_{i=1}^l\frac{1}{2^i}\vec{v}_i\in\R^2$, where vector $\vec{v}_i$ is given by
\[\vec{v}_i=\begin{cases}
(0,0)^T&\mbox{if }a_i=0\\
(1,0)^T&\mbox{if }a_i=1\\
(\frac{1}{2},\frac{\sqrt{3}}{2})^T&\mbox{if }a_i=2.
\end{cases}\]
Note this point is in $S$ and furthermore is the bottom left endpoint of some triangle in $S_l$.

\section{Limiting distributions} 
\label{sec:limiting distributions}
To some metric measure spaces, $(X,\mu)$, we are able to assign a finer invariant that contains more information than just the persistent homology fractal dimension.
Consider the set of the lengths of all intervals in $\PH^i(X_n)$, for each homological dimension $i$.
Experiments suggest that for some $X\subseteq \R^m$, the scaled set of interval lengths in each homological dimension converges  point-wise to some fixed probability distribution which depends on $\mu$ and on $i$.
% Folks also consider the distributions on the space of persistent diagrams! This takes into account birth (and death) times, as opposed to just persistence values. How does this relate to the Chazal and Divot work? They show that an expected persistence diagram exists, but is this not quite as general as studying distributions?

More precisely, for a fixed probability measure $\mu$, let $\hat{F}^{(i)}_n$ be the empirical cumulative distribution function of the $i$-dimensional persistent homology interval lengths in $\PH^i(X_n)$, where $X_n$ is a fixed sample of $n$ points from $X$ drawn in an i.i.d.\ fashion according to $\mu$.
If $\mu$ is absolutely continuous with respect to the Lebesgue measure on some compact set, then the function $\hat{F}^{(i)}_n(t)$ converges point-wise to the Heaviside step function as $n\to\infty$, since the fraction of interval lengths less than any fixed $\varepsilon>0$ is converging to one as $n\to \infty$.
More interestingly, for $\mu$ a sufficiently nice measure on $X\subseteq \R^m$, the rescaled empirical cumulative distribution function $\hat{F}^{(i)}_n(n^{-1/m}t)$ may converge to a non-constant curve. A back-of-the-envelope motivation for this rescaling is that if $L^i(X_n)=Cn^{(m-1)/m}$ with probability one as $n\to\infty$ (Conjecture~\ref{conj:PH-scaling}), then the average length of a persistent homology interval length is
\[\frac{L^i(X_n)}{\mbox{\# intervals}}=\frac{Cn^{(m-1)/m}}{\mbox{\# intervals}},\] which is proportional to $n^{-1/m}$ if the 
number of intervals is proportional to $n$.
We make this precise in the following conjectures.

\begin{conjecture}\label{conj:PH-distributions}
Let $\mu$ be a probability measure on a compact set $X\subseteq \R^m$, and let $\mu$ be absolutely continuous with respect to the Lebesgue measure.
Then the limiting distribution $\hat{F}^{(i)}(t) = \lim_{n\rightarrow \infty} \hat{F}^{(i)}_n(n^{-1/m}t)$, which depends on $\mu$ and $i$, exists.
\end{conjecture}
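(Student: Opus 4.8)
The plan is to recognize the rescaled empirical distribution function as a ratio of two counting statistics, each of which I would show converges after normalization by $n$. Writing each interval of $\PH^i(X_n)$ as a pair $(b,d)$ with length $d-b$, we have
\[
\hat F^{(i)}_n(n^{-1/m}t)=\frac{\#\{(b,d)\in\PH^i(X_n)\ :\ n^{1/m}(d-b)\le t\}}{\#\,\PH^i(X_n)}=\frac{N_n(t)}{N_n(\infty)},
\]
so it suffices to prove that $\tfrac1n N_n(t)\to g(t)$ with probability one for a deterministic nondecreasing $g$ with $0<g(\infty)<\infty$, after which the limiting curve is $\hat F^{(i)}(t)=g(t)/g(\infty)$. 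Equivalently, letting $\xi_n=\sum_{(b,d)\in\PH^i(X_n)}\delta_{n^{1/m}(d-b)}$ be the point measure of rescaled interval lengths on $[0,\infty)$, the goal is that $\tfrac1n\xi_n$ converges to a finite deterministic measure, and then to read off its cumulative mass.

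To identify $g$, I would follow the stabilization/superposition machinery for Euclidean functionals of point processes. First, pass from the binomial sample $X_n$ to a Poisson process of intensity $n\mu$ by a coupling and de-Poissonization argument~\cite{penrose2003random,yukich2006probability}. Second, perform a local rescaling: near a point $x$ of the support at which the density $f$ is positive and continuous, dilating space by $n^{1/m}$ sends the Poisson process to one converging locally to a homogeneous rate-$f(x)$ Poisson process on $\R^m$, while multiplying all interval lengths by exactly $n^{1/m}$ (matching the definition of $\xi_n$). Third, record the scaling identity for homogeneous diagrams: if $\nu_i(\lambda;t)$ denotes the expected number, per unit volume, of rate-$\lambda$ persistent homology intervals of rescaled length at most $t$, then sending rate $\lambda$ to rate $1$ by the dilation $y\mapsto\lambda^{1/m}y$ gives $\nu_i(\lambda;t)=\lambda\,\nu_i(1;\lambda^{1/m}t)$, so that a single universal profile $\nu_i(1;\cdot)$, depending only on $m$ and $i$, governs every local contribution. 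Fourth, a spatial law of large numbers of Penrose--Yukich type~\cite{yukich2006probability} would assemble these local contributions into
\[
\frac1n N_n(t)\ \longrightarrow\ \int_X f(x)\,\nu_i\!\bigl(1;\,f(x)^{1/m}t\bigr)\,dx=:g(t),
\]
so that $g(\infty)=\nu_i(1;\infty)\int_X f(x)\,dx=\nu_i(1;\infty)$ and the limiting curve
\[
\hat F^{(i)}(t)=\frac{\int_X f(x)\,\nu_i\!\bigl(1;\,f(x)^{1/m}t\bigr)\,dx}{\nu_i(1;\infty)}
\]
depends, as claimed, only on $\mu$ (through $f$) and on $i$ (and on $m$).

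Promoting convergence of expectations to the almost-sure convergence implicit in the statement would use a bounded-difference argument (Efron--Stein or an Azuma martingale on the ordered insertion of points): deleting or adding one sample point perturbs $N_n(t)$ by a stabilizing, light-tailed amount, yielding concentration of $\tfrac1n N_n(t)$ about its mean. The genuine obstacle lies in justifying the localization in the second and fourth steps for $i>0$. For $i=0$ the intervals are minimal spanning tree edge lengths, so Steele's subadditivity (Proposition~\ref{prop:Steele}) and the classical Euclidean-functional theory apply directly. For higher homology, however, the interval decomposition is a global algebraic invariant, and one must show that the short-interval part of the barcode is determined, up to negligible error, by $O(n^{-1/m})$-local point configurations and is insensitive to distant points---uniformly enough to integrate against $f$ and to survive de-Poissonization. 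Controlling how far-away points can shift births, deaths, and the matching producing the decomposition, with tail bounds strong enough for these purposes, is exactly the hard topological-analytic core; it is the step where one would invoke and strengthen the persistent-Betti-number limit theorems in the spirit of~\cite{divol-polonik,hiraoka2018limit}. A secondary difficulty, itself resting on the same Poisson-diagram intensity results, is to verify that $g(\infty)=\nu_i(1;\infty)$ is strictly positive and finite---i.e.\ that the number of $i$-dimensional intervals grows exactly linearly in $n$ in this thermodynamic regime---which is needed for the denominator to behave and is closely related to Conjecture~\ref{conj:PH-scaling}.
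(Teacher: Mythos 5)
You should be aware at the outset that the statement you are proving is a \emph{conjecture}: the paper does not prove it, and offers only (a) a proof of a strictly weaker special case --- $m=1$, $i=0$, $\mu$ uniform on $[0,1]$ (Section~\ref{ss:interval}), where the spacings of the order statistics are jointly Dirichlet, each interval length is marginally Beta$(1,n)$, and the \emph{true} (not empirical) CDF satisfies $F_n^{(0)}(t/n)=1-(1-t/n)^n\to 1-e^{-t}$, i.e.\ convergence distribution-wise rather than the point-wise convergence of $\hat{F}_n^{(i)}$ asserted in the conjecture --- and (b) experiments in $\R^2$, together with an explicit admission that even for $i=0$ on the unit square the limiting rescaled edge-length distribution of the minimal spanning tree is, to the authors' knowledge, unknown. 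So there is no proof in the paper for your argument to be compared against; a complete proof would be new mathematics.

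Judged on its own terms, your proposal is a program, not a proof, and the gap is exactly where you say it is: showing that for $i>0$ the short bars of the barcode are determined, up to negligible error, by $O(n^{-1/m})$-local point configurations, with tail bounds uniform enough to survive integration against $f$ and de-Poissonization, is the mathematical content of the conjecture, not a technical step that can be deferred. The same is true of your assumption that the universal profile $\nu_i(1;\cdot)$ exists, is finite, and has strictly positive total mass (equivalently, that the number of $i$-dimensional intervals grows linearly in $n$); without this, your ratio $N_n(t)/N_n(\infty)$ is indeterminate at the relevant scale. Your claim that for $i=0$ ``Steele's subadditivity and the classical Euclidean-functional theory apply directly'' also overstates matters: Steele's theorem controls the \emph{sum} of edge lengths, not their empirical distribution, which is precisely why the paper treats even the $i=0$, $m=2$ case as open (weak laws of Penrose--Yukich type are the right tool, but invoking them requires a stabilization argument, not a citation to Proposition~\ref{prop:Steele}). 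That said, your architecture --- Poissonization, local rescaling to a homogeneous process of rate $f(x)$, the scaling identity $\nu_i(\lambda;t)=\lambda\,\nu_i(1;\lambda^{1/m}t)$, a spatial law of large numbers, then concentration --- is the plausible route, and it is the one pursued in the literature the paper cites (\cite{hiraoka2018limit,divol-polonik}). It also passes an important sanity check: the paper shows experimentally (Section~\ref{ss:periodic}) that without absolute continuity the rescaled CDFs converge only to a periodic family of distributions, so any correct proof must use absolute continuity essentially, and in your outline it enters exactly where it should, at the local-rescaling step. The skeleton is right; the hard kernel is missing.
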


In Section~\ref{ss:interval} we show that Conjecture~\ref{conj:PH-distributions} holds when $\mu$ is the uniform distribution on an interval, and in Section~\ref{ss:R2} we perform experiments in higher dimensions. 

\begin{question}
Assuming Conjecture~\ref{conj:PH-distributions} is true, what is the limiting rescaled distribution when $\mu$ is the uniform distribution on an $m$-dimensional ball, or alternatively an $m$-dimensional cube? 
\end{question}

\begin{conjecture}\label{conj:PH-distributions-uniform}
Let the compact set $X\subseteq \R^m$ have positive Lebesgue measure, and let $\mu$ be the corresponding probability measure (i.e., $\mu$ is the restriction of the Lebesgue measure to $X$, rescaled to have mass one).
Then the limiting distribution $\hat{F}^{(i)}(t) = \lim_{n\rightarrow \infty} \hat{F}^{(i)}_n(n^{-1/m}t)$ exists and depends only on $m$, $i$, and the volume of $X$.
\end{conjecture}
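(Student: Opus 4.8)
The plan is to reduce the global empirical distribution to a universal local one by a stabilization argument of the Penrose--Yukich type, of the sort already used in~\cite{divol-polonik,schweinhart2018weighted} to establish Conjecture~\ref{conj:PH-scaling} for convex bodies and balls. The guiding heuristic is that the rescaling by $n^{-1/m}$ isolates precisely the \emph{short} persistent homology intervals, whose birth and death scales are of order the nearest-neighbor distance $(V/n)^{1/m}$, where $V=\mathrm{vol}(X)$. At this scale a uniform sample on any $X$ is locally indistinguishable from a homogeneous Poisson point process on $\R^m$, so the local statistics of $\PH^i$ should not see the global shape of $X$ at all; only $V$ survives, through the length scale, and only $m$ and $i$ through the local topology.

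First I would Poissonize, replacing the binomial process of exactly $n$ i.i.d.\ points by a Poisson process $\mathcal{P}_n$ on $X$ of intensity $n/V$, and then rescale space by the factor $(n/V)^{1/m}$ so that the rescaled process converges to a unit-intensity homogeneous Poisson process $\mathcal{P}_\infty$ on $\R^m$. One defines $G^{(i)}_m$ to be the limiting empirical distribution of $i$-dimensional persistent homology interval lengths for $\mathcal{P}_\infty$ restricted to a large ball (normalizing by the total number of intervals); this candidate limit depends only on $m$ and $i$ by the homogeneity and isotropy of $\mathcal{P}_\infty$. Tracking the two rescalings, a persistent homology interval of length $n^{-1/m}t$ in the original coordinates corresponds to a rescaled length $t/V^{1/m}$, so the conjectured limit is $\hat{F}^{(i)}(t)=G^{(i)}_m\!\bigl(t/V^{1/m}\bigr)$, which exhibits exactly the claimed dependence on $m$, $i$, and $V$.

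The central step is a weak law of large numbers for the empirical measure of rescaled interval lengths: one must show that $\hat{F}^{(i)}_n(n^{-1/m}t)$ concentrates around its expectation and that this expectation converges to $G^{(i)}_m(t/V^{1/m})$. I would attempt to realize the counting functional ``number of $\PH^i$ intervals of rescaled length at most $s$'' as a \emph{stabilizing} functional: whether a given short interval is born and dies within a prescribed scale window should be determined, with overwhelming probability, by the points of $X_n$ lying in a bounded (random but tight) neighborhood. Given stabilization together with a polynomial moment bound, the abstract machinery in~\cite{yukich2006probability} yields both the expectation convergence and the variance decay needed for convergence in probability; the boundary contribution from points within $O(n^{-1/m})$ of $\partial X$ is $o(n)$ and hence washes out after normalization by the total interval count, which I expect to be $\Theta(n)$, as for the $n-1$ minimal-spanning-tree edges when $i=0$.

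The hard part will be establishing stabilization for $i>0$. Unlike minimal-spanning-tree edge lengths, which are manifestly local, an individual higher-dimensional homology class can in principle be created or destroyed by a faraway configuration of points, and the decomposition of $\PH^i$ into interval summands is a global algebraic operation rather than a sum of independent local contributions. The most promising route is to exploit that the short intervals---the only ones surviving the rescaling---are governed by local cycles, and to quantify this through a localization lemma bounding a feature's radius of influence by a fixed multiple of its death scale; the long intervals, including the $o(n)$ maximally persistent cycles of~\cite{bobrowski2015maximally}, contribute vanishing mass to the rescaled cumulative distribution and can be discarded. Making ``radius of influence controlled by death scale'' precise, and verifying that it holds with the exponential tail probabilities that the stabilization theorems demand, is where I expect the real work to lie.
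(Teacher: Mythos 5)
The statement you set out to prove is stated in the paper as a \emph{conjecture}: the paper contains no proof of it, only experimental evidence (the uniform-versus-beta comparison in Figure~\ref{fig:unifbetacdf}) and a remark that it is related to~\cite{chazal2018density} and, for stationary point processes, to~\cite[Theorem~1.11]{hiraoka2018limit} and~\cite{owada2018convergence}. So there is no proof in the paper to compare against; the only question is whether your argument settles the conjecture, and it does not. Your plan---Poissonize, rescale to a unit-intensity homogeneous Poisson process on $\R^m$, identify the candidate limit $G^{(i)}_m\bigl(t/V^{1/m}\bigr)$, and transfer back via stabilization in the style of~\cite{yukich2006probability,divol-polonik,schweinhart2018weighted}---is the sensible route and is essentially how the literature handles cubes, convex bodies, and balls. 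But the step you explicitly defer (``establishing stabilization for $i>0$'') is not a technical detail to be filled in later; it is the entire mathematical content of the conjecture. A localization lemma of the form ``a feature's radius of influence is a bounded multiple of its death scale, with exponential tails'' is precisely what is unavailable for general compact $X$ and higher homological dimensions, and asserting its plausibility leaves you with a research program, not a proof.

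There is a second, concrete gap: the conjecture allows an \emph{arbitrary} compact $X$ of positive Lebesgue measure, and your boundary estimate (``points within $O(n^{-1/m})$ of $\partial X$ contribute $o(n)$'') fails for such sets. A fat Cantor set, or its product with a cube, is compact with positive Lebesgue measure and empty interior, so $\partial X\supseteq X$ and \emph{every} sample point lies on the boundary; the $O(n^{-1/m})$-neighborhood of $\partial X$ then carries a non-vanishing fraction of the mass. To handle general $X$ you must instead invoke the Lebesgue density theorem: almost every point of $X$ is a density point, so the rescaled local configuration around a typical sample converges to the homogeneous Poisson process even though no neighborhood of that point is contained in $X$; making this uniform enough (e.g., via Egorov) and compatible with whatever stabilization statement you prove is additional, nontrivial work that your outline does not address. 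Finally, your normalization silently assumes that the total number of intervals in $\PH^i(X_n)$ is $\Theta(n)$ with concentration for $i>0$; for $i=0$ this count is exactly $n-1$, but for $i>0$ it is a random quantity whose linear growth and concentration themselves require the same missing machinery.
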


\begin{question}
Assuming Conjecture~\ref{conj:PH-distributions-uniform} is true, what is the limiting rescaled distribution when $X$ has unit volume?
\end{question}

\begin{remark}
Conjecture~\ref{conj:PH-distributions-uniform} is false if $\mu$ is not a uniform measure (i.e.\ a rescaled Lebesgue measure). Indeed, the uniform measure on a square (experimentally) has a different limiting rescaled distribution than a (nonconstant) beta distribution on the same unit square, as seen in Figure \ref{fig:unifbetacdf}.
\end{remark}

\begin{remark}
Conjecture~\ref{conj:PH-distributions-uniform} is related to~\cite{chazal2018density}, and in the case of stationary point processes, to~\cite[Theorem~1.11]{hiraoka2018limit} and~\cite{owada2018convergence}.
\end{remark}

\begin{figure}
\includegraphics[width=.4\linewidth]{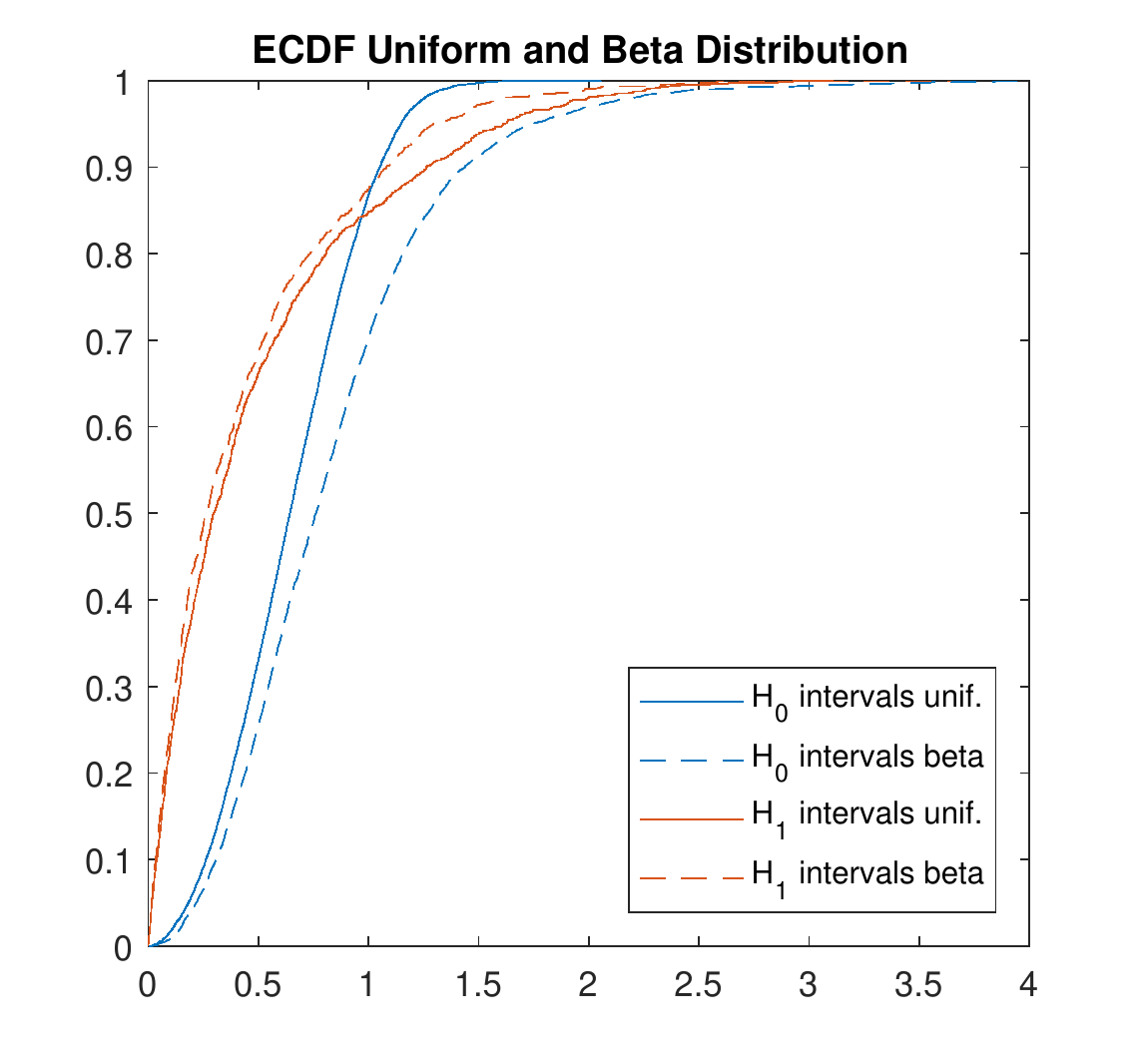}
\caption{Empirical CDF's for the H$_0$ and H$_1$ interval lengths computed from 10,000 points sampled from the unit square according to the uniform distribution and beta distribution with shape and size parameter both set to 2. The limiting distributions appear to be different.
\label{fig:unifbetacdf}}
\end{figure}

\subsection{The uniform distribution on the interval}\label{ss:interval}

In the case where $\mu$ is the uniform distribution on the unit interval $[0,1]$, then a weaker version of Conjecture~\ref{conj:PH-distributions} (convergence distribution-wise) is known to be true, and furthermore a formula for the limiting rescaled distribution is known.
If $X_n$ is a subset of $[0,1]$ drawn uniformly at random, then (with probability one) the points in $X_n$ divide $[0,1]$ into $n+1$ pieces.
The joint probability distribution function for the lengths of these pieces is given by the flat Dirichlet distribution, which can be thought of as the uniform distribution on the $n$-simplex (the set of all $(t_0,\ldots,t_n)$ with $t_i\ge0$ for all $i$, such that $\sum_{i=0}^n t_i=1$).
Note that the intervals in $\PH^0(X_n)$ have lengths $t_1,\ldots,t_{n-1}$, omitting $t_0$ and $t_n$ which correspond to the two subintervals on the boundary of the interval.

The probability distribution function of each $t_i$, and therefore of each interval length in $\PH^0(X_n)$, is the marginal of the Dirichlet distribution, which is given by the Beta distribution $B(1,n)$~\cite{bilodeau2008theory}.
After simplifying, the true cumulative distribution function (which we denote by $F_n^{(0)}$ instead of the empirical cumulative distribution function $\hat{F}_n^{(0)}$) of $B(1,n)$ is given by ~\cite{schervish1996theory}
\[F^{(0)}_n(t) = \frac{B(t; 1, n)}{B(1, n)} = \frac{\int_0^t s^0(1-s)^{n-1} ds}{\frac{\Gamma(1)\Gamma(n)}{\Gamma(n+1)}}=1-(1-t)^n.\]
As $n$ goes to infinity, $F^{(0)}_n(t)$ converges pointwise to the constant function 1.
However, after rescaling, $F^{(0)}_n(n^{-1}t)$ converges to a more interesting distribution independent of $n$.
Indeed, we have $F^{(0)}_n\left(\tfrac{t}{n}\right)= 1-(1-\tfrac{t}{n})^n$, and the limit as $n\to\infty$ is
\[\lim_{n\rightarrow \infty} F^{(0)}_n\left(\tfrac{t}{n}\right) = 1-e^{-t}.  \]
This is the cumulative distribution function of the exponential distribution with rate parameter one.
Therefore, the rescaled interval lengths in the limit as $n\to\infty$ are distributed according to the exponential distribution $\Exp(1)$.

\begin{figure} 
\includegraphics[width=1.0\linewidth]{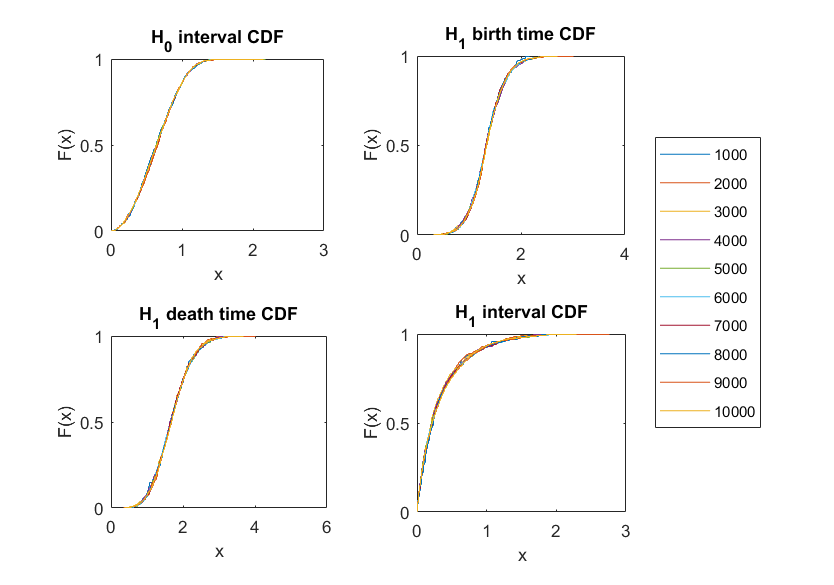}
\caption{Empirical CDF's for $H_0$ interval lengths, $H_1$ birth times, $H_1$ death times, and $H_1$ interval lengths computed from an increasing number of $n$ points drawn uniformly from the 2-dimensional unit square, and rescaled by $n^{1/2}$. It is plausible that both $F_n^{(0)}(n^{-1/2}t)$ and $F_n^{(1)}(n^{-1/2}t)$ converge point-wise to a limiting probability distribution.}
\label{fig:twodim_unif_ecdf}
\end{figure}

\subsection{Experimental evidence for Conjecture~\ref{conj:PH-distributions} in $\R^2$}\label{ss:R2}
We now move to the case where $\mu$ is the uniform distribution on the unit square in $\R^2$.
It is known that the sum of the edge lengths of the minimal spanning tree, given by $L^0(X_n)$ where $X_n$ is a random sample of $n$ points from the unit square, converges as $n\to\infty$ to $C n^{1/2}$, for a constant $C$~\cite{Steele1988}.
However, to our knowledge the limiting distribution of all (rescaled) edge lengths is not known.
We instead analyze this example empirically.
The experiments in Figure~\ref{fig:twodim_unif_ecdf} suggest that as $n$ increases, it is plausible that both $F_n^{(0)}(n^{-1/2}t)$ and $F_n^{(1)}(n^{-1/2}t)$ converge point-wise to a limiting probability distribution. We have tried to fit these limiting probability distributions to standard distributions, without yet having found obvious candidates.

\subsection{Examples where a limiting distribution does not exist}\label{ss:periodic}

In this section we give experimental evidence that the assumption of being a rescaled Lebesgue measure in~Conjecture~\ref{conj:PH-distributions} is necessary. Our example computation is done on a separated Sierpi\'{n}ski triangle.

For a given separation value $\delta\ge0$, the \emph{separated Sierpi\'{n}ski triangle} can be defined as the set of all points in $\R^2$ of the form $\sum_{i=1}^\infty \frac{1}{(2+\delta)^i}\vec{v}_i$, where each vector $\vec{v}_i\in\R^2$ is either $(0,0)$, $(1,0)$, or $(\frac{1}{2},\frac{\sqrt{3}}{2})$.
The Hausdorff dimension of this self-similar fractal shape is $\log_{2+\delta}(3)$ (\cite[Theorem~9.3]{falconer2004fractal} or~\cite[Theorem~III]{moran1946additive}), 
% Indeed, Theorem~9.3 of Falconer says that the fractal dimension must be the value s such that $\sum_{i=1}^3(\frac{1}{2+\delta})^s=3(\frac{1}{2+\delta})^s=1$, and solving for s we get $(2+\delta)^s=3$ or $s=\log_{2+\delta}(3)$
and note that when $\delta=0$, we recover the standard (non-separated) Sierpi\'{n}ski triangle.
See Figure~\ref{fig:Sierpinski} for a picture when $\delta=2$.
Computationally, when we sample a point from the separated Sierpi\'{n}ski triangle, we sample a point of the form $\sum_{i=1}^l \frac{1}{(2+\delta)^i}\vec{v}_i$, where in our experiments we use $l=100{,}000$.

\begin{figure}[t]
\includegraphics[width=.5\textwidth]{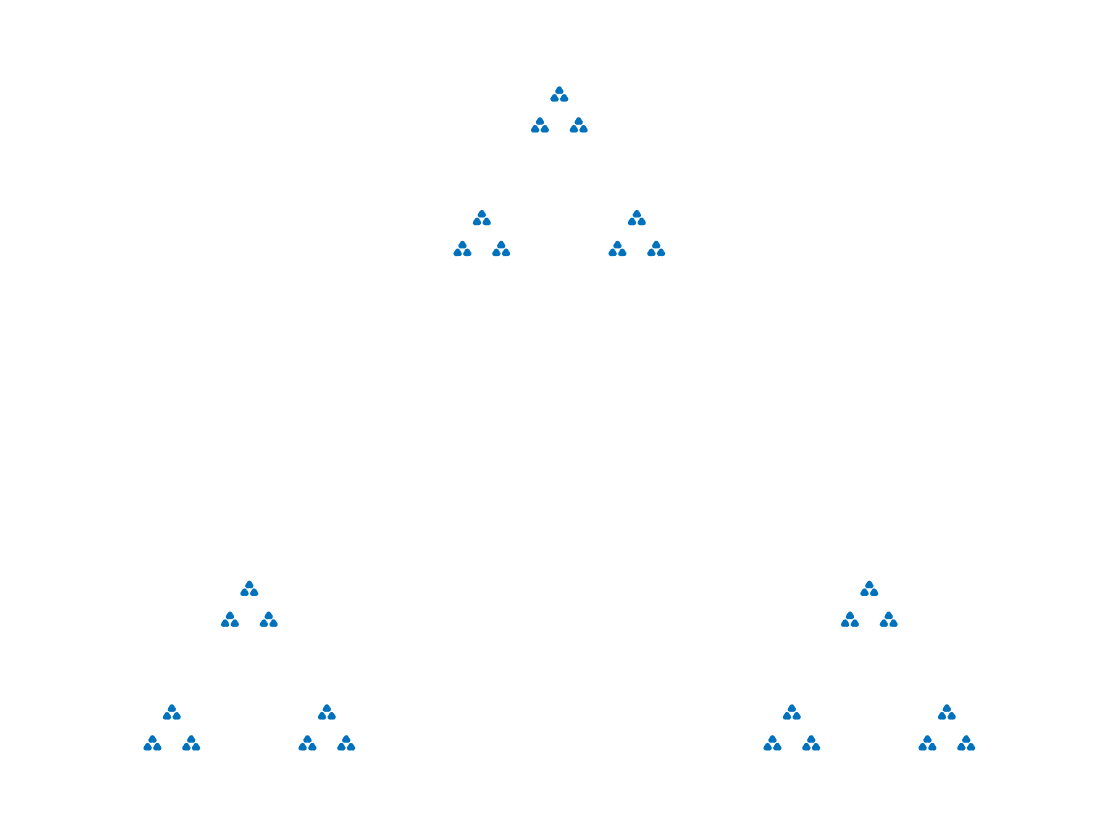}
\caption{Plot of 20,000 points sampled at random from the Sierpi\'{n}ski triangle of separation $\delta=2$.}
\label{fig:Sierpinski}
\end{figure}

In the following experiment we sample random points from the separated Sierpi\'{n}ski triangle with $\delta=2$.
As the number of random points $n$ goes to infinity, it appears that the rescaled\footnote{Since the separated Sierpi\'{n}ski triangle has Hausdorff dimension $\log_{2+\delta}(3)$, the rescaled distributions we plot are $F_n^{(0)}(n^{-1/m}t)$ with $m=\log_{2+\delta}(3)$.} CDF of $H_0$ interval lengths are not converging to a fixed probability distribution, but instead to a periodic family of distributions, in the following sense.
If you fix $k\in\N$ then the distributions on $n=k,3k,9k,27k,\ldots,3^jk,\ldots$ points appear to converge as $j\to\infty$ to a fixed distribution.
Indeed, see~Figure~\ref{fig:sierpinski_k} for the limiting distribution on $3^j$ points, and for the limiting distribution on $3^j\cdot 2$ points. 
However, the limiting distribution for $3^jk$ points and the limiting distribution for $3^jk'$ points appear to be the same if and only if $k$ and $k'$ differ by a power of $3$. See~Figure~\ref{fig:sierpinski_ecdf}, which shows four snapshots from one full periodic orbit. 

\begin{figure}[h] 
\includegraphics[width=.8\textwidth]{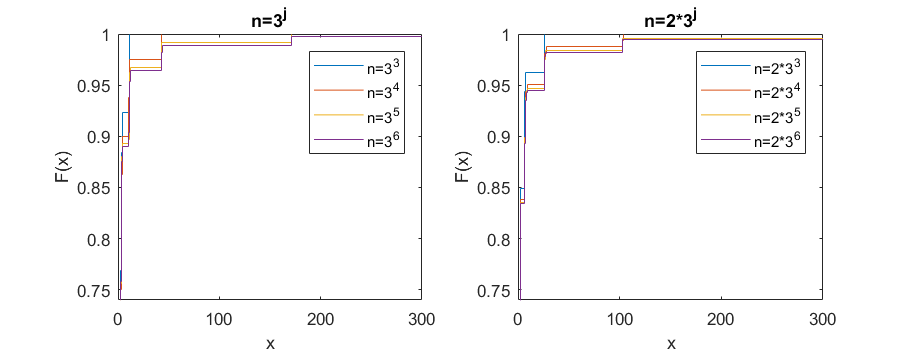}
\caption{ This figure shows the empirical rescaled CDFs of $H_0$ interval lengths for $n=3^j$ points (left) and for $n=3^j\cdot 2$ points (right) sampled from the separated Sierpi\'{n}ski triangle with $\delta=2$. Each figure appears to converge to a fixed limiting distribution as $j\to\infty$, but the two limiting distributions are not equal.}
\label{fig:sierpinski_k}
\end{figure}

\begin{figure}[h]
\includegraphics[width=.9\textwidth]{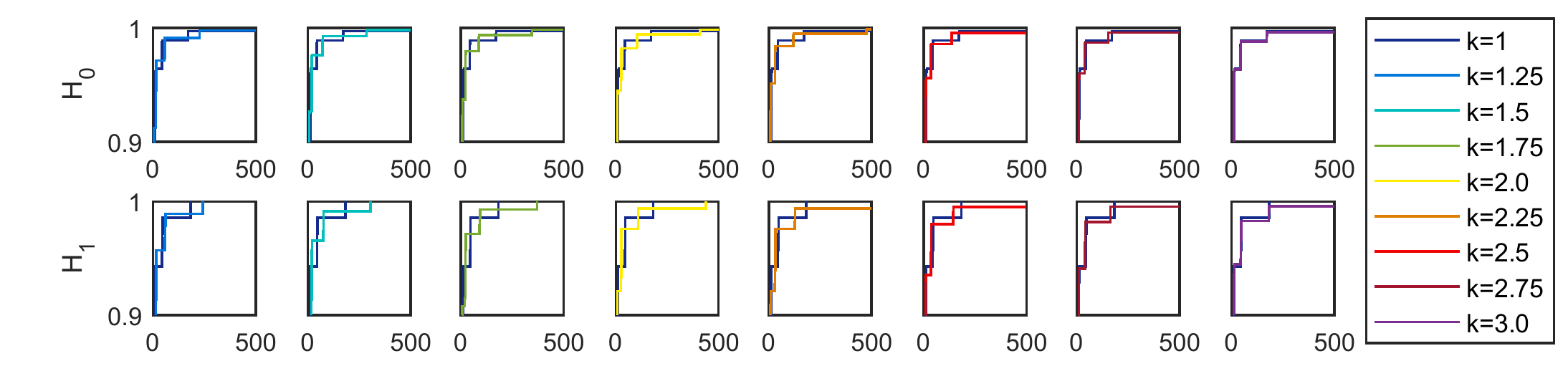}
\caption{Empirical rescaled CDF's for $H_0$ interval lengths, and $H_1$ interval lengths computed from an increasing number of $n=k\cdot 3^6$ points from the separated Sierpi\'{n}ski triangle with $\delta=2$, moving left to right. Note that as $k$ increases between adjacent powers of three, the ``bumps" in the distribution shift to the right, until the starting distribution reappears.}
\label{fig:sierpinski_ecdf}
\end{figure}
Here is an intuitively plausible explanation for why the rescaled CDFs for the separated Sierpi\'{n}ski triangle converge to a periodic family of distributions, rather than a fixed distribution:
Imagine focusing a camera at the origin of the Sierpi\'{n}ski triangle and zooming in.
Once you get to $(2+\delta)\times$ magnification, you see the same image again.
This is one full period.
However, for magnifications between $1\times$ and $(2+\delta)\times$ you see a different image.
In our experiments sampling random points, zooming in by a factor of $(2+\delta)\times$ is the same thing as sampling three times as many points (indeed, the Hausdorff dimension is $\log_{2+\delta}(3)$).
When zooming in you see the same image only when the magnification is at a multiple of $2+\delta$, and analogously when sampling random points perhaps we should expect to see the same probability distribution of interval lengths only when the number of points is multiplied by a power of 3.
% In the case of most self-similar fractals, we should be able to define a periodic family of limiting curves. 
% Rachel's experiments with the Cantor set and $d=1$ show that those distributions do not converge to a periodic family. The case $d<1$ is confusing. Though for $\delta=2$, we do have $\log_{2+\delta}(3)<1$.
% What happens for Cantor dust, which has fractal dimension $\log_3(4)>1$?

\section{Another way to randomly sample from the Sierpi\'{n}ski triangle}\label{sec:sierpinski}

An alternate approach to constructing 
a sequence of measures converging 
to the Sierpi\'{n}ski triangle 
is using a particular 
Lindenmayer system,
which generates a sequence 
of instructions in a recursive 
fashion~\cite[Figure~7.16]{peitgen2006chaos}.
% Available at http://carlosreynoso.com.ar/archivos/peitgen.pdf; see also wikipedia
Halting the recursion 
at any particular level $l$ 
will give a (non-fractal) 
approximation to the Sierpi\'{n}ski 
triangle as a piecewise linear curve 
with a finite number of segments; see Figure~\ref{fig:SierpinskiArrowhead}. 

\begin{figure}[h]
\includegraphics[width=.5\textwidth]{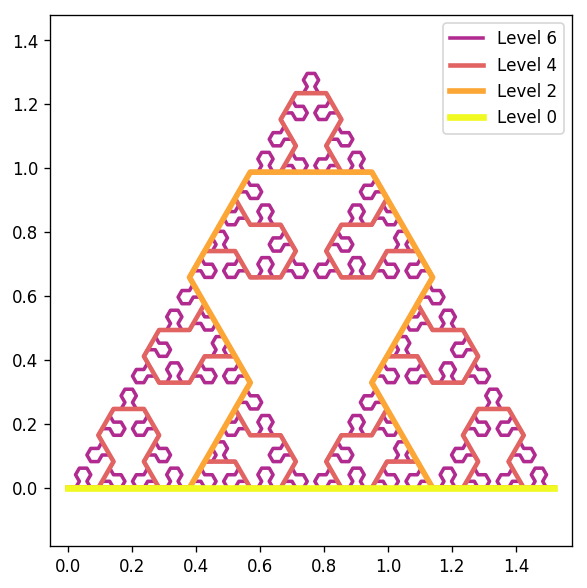}
\caption{The Sierpi\'{n}ski triangle as the limit of a sequence of curves.
We can uniformly randomly sample from the curve at level $l$ to generate a sequence of measures $\mu_l$ converging to the Sierpi\'{n}ski triangle measure as $l\to\infty$.}
\label{fig:SierpinskiArrowhead}
\end{figure}

\begin{figure}[t]
\includegraphics[width=0.9\linewidth]{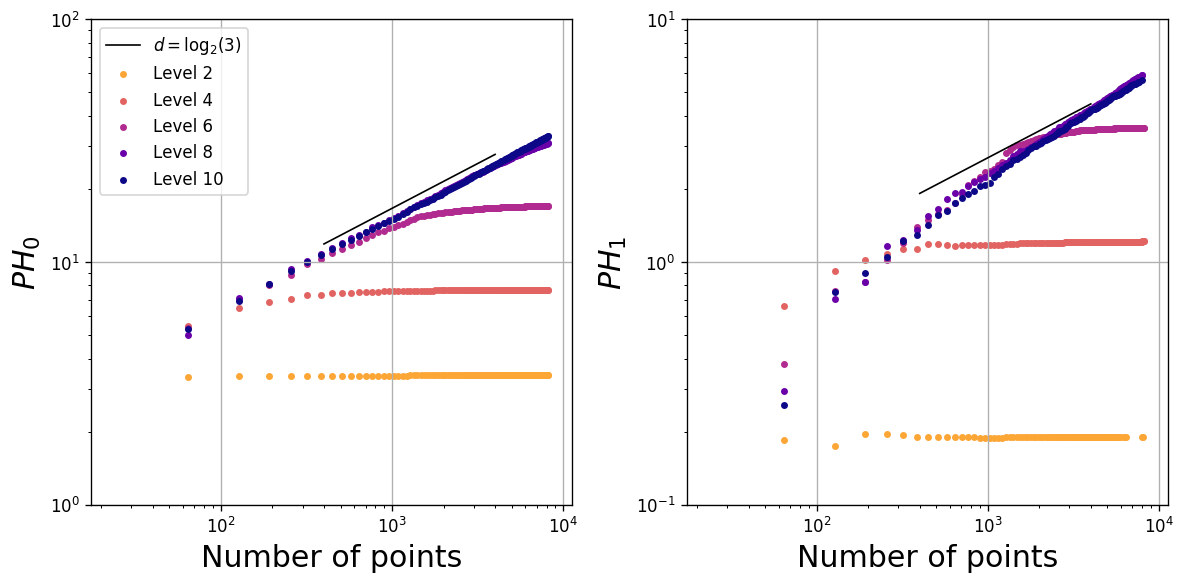}
\caption{Scaling behaviors for various ``depths'' of the 
Sierpinski arrowhead curves visualized in Figure~\ref{fig:SierpinskiArrowhead}, in homological dimensions $0$ and $1$.}
\label{fig:SierpinskiScaling}
\end{figure}

Let $\mu_l$ be the uniform measure on the piecewise linear curve at level $l$. In Figure~\ref{fig:SierpinskiScaling}  we sample $n$ points from $\mu_l$ and compute $L^i(X_n)$, displayed in a log-log plot, for $i=0$ and $1$.
Since each $\mu_l$ for $l$ fixed is non-fractal (and 1-dimensional) in nature, the ultimate asymptotic behavior will be $d=1$ once the number of points $n$ is sufficiently large (depending on the level $l$).
However, for level $l$ sufficiently large (depending on the number of points $n$) we see that there is an intermediate regime in the log-log plots which scale with the expected fractal dimension near $\log_2(3)$.
As pointed out by an anonymous reviewer, one could potentially prove that the scaling in the intermediate regime is indeed $\log_2(3)$, as follows. The $0$ or $1$-dimensional persistent homology of the entire Sierpi\'{n}ski curve at level $l$ could likely be computed, for example using ideas similar to~\cite[Proposition~3.2]{macpherson2012measuring}. Then, the difference between the persistent homology of the entire curve and a random sample $X_n$ of $n$ points could perhaps be controlled by using the stability of persistent homology~\cite{ChazalDeSilvaOudot2013} and ideas analogous to those in~\cite[Lemma~9 and Proposition~5]{schweinhart2018persistent-3}, although rigorously controlling the effects of noise in all homological dimensions may not be easy.
We expect a similar relationship between the number of points $n$ and the level $l$ to hold for many types of self-similar fractals.

We also give intuition why, for any fixed level $l$, the 0-dimensional persistent homology dimension of the curve $\mu_l$ is one.
Note that $\mu_l$ consists of $3^l$ line segments (see Figure~\ref{fig:SierpinskiArrowhead}).
Suppose $X_N$ is a sample of $N$ points from $\mu_l$ that is dense enough so that the minimal spanning tree with vertex set $X_N$ consists exclusively of edges between two vertices that are either on the same line segment of $\mu_l$ or on adjacent line segments of $\mu_l$.
If we then consider a nested sequence $X_N\subseteq X_{N+1}\subseteq X_{N+2}\subseteq \ldots$ of increasing finite subsets of $\mu_l$, it follows that $L^0(X_n)$ for $n\ge N$ is a monotonically increasing sequence bounded above by the length of the curve $\mu_l$.
In this setting we have $L^0(X_n)\le C$ where $C$ is the length of $\mu_l$; note that $C=Cn^{(d-1)/d}$ when $d=1$.

\section{Asymptotic approximation of the scaling exponent}\label{sec:asymptotic}

From Definition~\ref{def:PH-curve-dimension} we consider how to estimate the exponent $(d-1)/d$ numerically for a given metric measure space $(X,\mu)$. 
For a fixed number of points $n$, a pair of values $(n,\ell_n)$ is produced, where $\ell_n = L^{i}(X_n)$ for a sampling $X_n$ from $(X,\mu)$ of cardinality $n$. If the scaling holds asymptotically for $n$ sampled past a sufficiently large point, then we can approximate the exponent by sampling for a range of $n$ values and observing the rate of growth of $\ell_n$. 
A common technique used to estimate power law behavior (see for example~\cite{clauset2009power}) is to fit a linear function to the log-transformed data. 
The reason for doing this is a hypothesized asymptotic scaling $y \sim e^C x^{\alpha}$ as $x \to \infty$ becomes a linear function after taking the logarithm: $\log(y) \sim C + \alpha \log(x)$.

%\note{define $\sim$}

However, the expected power law in the data only holds asymptotically for $n \to \infty$. We observe in practice that the trend for small $n$ is subdominant to its asymptotic scaling.
Intuitively we would like to throw out the non-asymptotic portion of the sequence, but deciding where to threshold depends on the sequence.
We propose the following approach to address this issue.

Suppose in general we have a countable set of measurements $(n,\ell_n)$, with $n$ ranging over some subset of the positive integers.
Create a sequence in monotone increasing order of $n$ so that we have a $(n_k,\ell_{n_k})_{k=1}^\infty$ with $n_k>n_j$ for $k>j$.
For any pairs of integers $p,q$ with $1\leq p < q$, we denote the log-transformed data of the corresponding terms in the sequence as
\[
S_{pq} = \left\{ \big(\log(n_k), \log(\ell_{n_k}) \big) ~|~ p \leq k \leq q \right\}\subseteq\R^2.
\]
Each finite collection of points $S_{pq}$ has an associated pair of linear least-squares coefficients $(C_{pq},\alpha_{pq})$, where the line of best fit to the set $S_{pq}$ is given by $y=C_{pq}+\alpha_{pq}x$.
For our purposes we are more interested in the slope $\alpha_{pq}$ than the intercept $C_{pq}$.
We expect that we can obtain the fractal dimension by considering the joint limits
in $p$ and $q$: if we define $\alpha$ as
\[
  \alpha=\lim_{p,q \to \infty} \, \alpha_{pq},
\]
then we can recover the dimension by solving $\alpha=\frac{d-1}{d}$.
A possibly overly restrictive assumption is that the asymptotic behavior of $\ell_{n_k}$ is monotone. 
If this is the case, we may expect \emph{any} valid joint limit $p,q \to \infty$ will be defined and produce the same value.
For example, setting $q=p+r$ we expect the following to hold:
\[
\alpha=\lim_{p \to \infty}\lim_{r \to \infty} \, \alpha_{p,p+r}.
\]
In general, the joint limit may exist under a wider variety of ways in which one allows $q$ to grow relative to $p$.

Now define a function $A: \mathbb{R}^2 \to \mathbb{R}$, which takes on values $A(\frac{1}{p}, \frac{1}{q}) = \alpha_{pq}$, and define $A(0,0)$ so that $A$ is continuous at the origin. 
Assuming $\alpha_{pq} \to \alpha$ as above, then any sequence $(x_k,y_k)_k \to (0,0)$ will produce the same limiting value $A(0,0)$ and the limit $\lim_{(x,y) \to (0,0)} A(x,y)$ is well-defined. This suggests an algorithm for finite data:
\begin{enumerate}
\item Obtain a collection of estimates $\alpha_{pq}$ for various values of $p,q$, and then
\item use the data $\{(\frac{1}{p},\frac{1}{q}, A(\frac{1}{p}, \frac{1}{q}))\}$ to extrapolate an estimate for $A(0,0) = \alpha$, from which we can solve for the fractal dimension $d$.
\end{enumerate}
For simplicity, we currently fix $q=n_\mathrm{max}$ and collect estimates varying only $p$; i.e., we only collect estimates of the form $\alpha_{p\,n_\mathrm{max}}$.
In practice it is safest to use a low-order estimator to limit the risks of extrapolation.
We use linear fit for the two-dimensional data $A( \frac{1}{p}, \frac{1}{q})$ to produce a linear approximation $\hat{A}(\xi, \eta) = a + b \xi + c \eta$, giving an approximation  $\alpha = A(0,0) \approx \hat{A}(0,0) = a$. 
% Clarify how converting to this alternate setting provides a better estimate than the naive approach of computing an approximation from the collection of least-squares slopes as $n$ grows? Maybe we include a graph that shows a side-by-side comparison of one of our standard log-log plots, with the transformed data and first/second-order polynomial fit used for an estimator?

\begin{figure}
\includegraphics[width=0.8\linewidth]{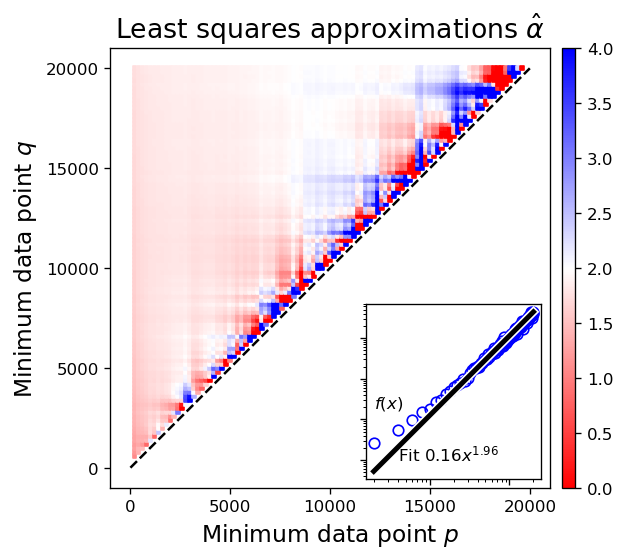} \\
\includegraphics[width=0.9\linewidth]{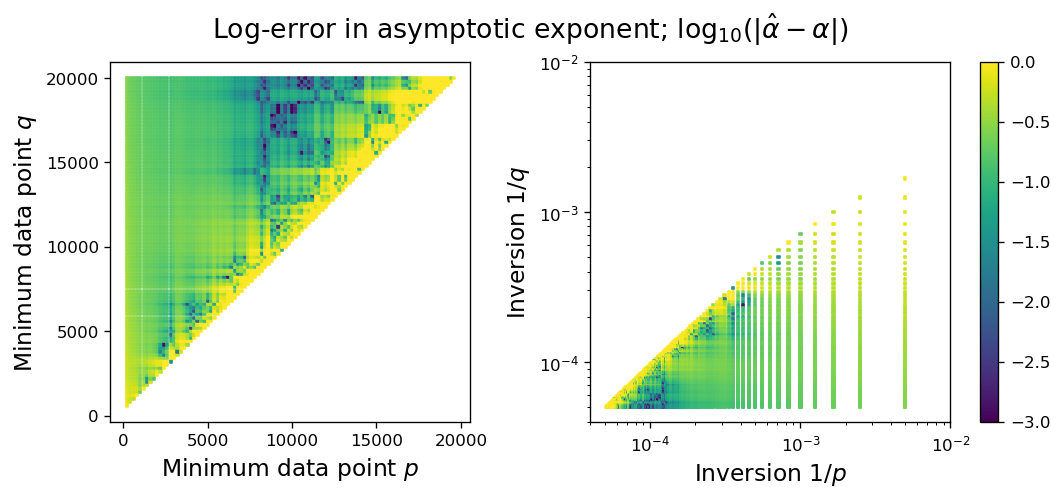}
\caption{
Top: Approximations $\alpha_{pq}$ for selections of $(p,q)$ in the function 
$f(x)$ in (\ref{test_fx}). Top inset: sampling of $f(x)$ (blue circles) 
and the corresponding asymptotic best fit (black). 
(Bottom left) Log-absolute-error of the coefficients. Note that the approximation is generally poor for $|p-q|$ small, due to a small number of sample points.
(Bottom right) Same values, with the coordinates mapped as $\xi = 1/p, \, \eta = 1/q$. The value to be extrapolated is at $(\xi,\eta) = (0,0)$.}
\label{fig:example_extrap}
\end{figure}

Shown in Figure~\ref{fig:example_extrap} is an example applied to the function
\begin{equation}
f(x) = \left( 100x + \frac{1}{10}x^2 \right) (1 + 0.1\varepsilon(x))
\label{test_fx}
\end{equation}
with $\varepsilon = dW(x)$, with $W(x)$ a sampling of 
standard Brownian noise, and $x$ regularly sampled 
in $[400,20000]$.
The theoretical asymptotic is $\alpha = 2$ and should be attainable 
for sufficiently large $x$ and enough sample points to overcome noise. 
Note that there is a balance needed to both keep a sufficient number of points to have a robust estimation (we want $q-p$ to be large) and to avoid including data in the pre-asymptotic regime (thus $p$ must be relatively large).
Visually, this is seen near the top side of the triangular region, where the error drops to roughly 
the order of $10^{-3}$.
The challenge for an arbitrary function is not knowing precisely where this balance is; see~\cite[Sections~1, 3.3-3.4]{clauset2009power} in the context of estimating $x_\text{min}$ (in their language) for the tails of probability density functions.

It is important to note that 
the effects of noise and pre-asymptotic 
data in estimation of $\alpha$ can 
be non-negligible even for what are 
seemingly sufficiently large values of $x$. 
For example, we observe that even when removing 
noise ($\varepsilon(x) \to 0$) and performing a 
similar power fit on the restriction of 
the data to $x \in [19000,20000]$ 
we obtain an estimated exponent $\hat{\alpha} \approx 1.9393$. 
Note the transition from first to second 
order behavior begins at $x = 10^3$, 
which is an order of magnitude earlier. 
Given this, we expect a rule of thumb 
recovering more than one 
significant digit reliably when performing 
random sampling requires sampling at least 
two orders of magnitude beyond 
when a transition in power law behavior occurs 
(this can certainly be made precise if one has a 
formula for the function in advance).

% The choice of inversion $1/p$, $1/q$ is arbitrary, in that it is only a tool to map infinity to $(0,0)$. Varying the coefficient $1$ in this inversion to something else should have the effect of changing the preferential weighting of $\alpha_{pq}$. 
% For example, if we instead chose a mapping $(x,y) = (\frac{10}{p},\frac{10}{q})$, then the data for $p$ and $q$ small would have greater impact on the estimate for $\alpha$ (compared to the original mapping). Similarly, choosing a mapping $(x,y) = (\frac{1}{10p},\frac{1}{10q})$ would lessen the influence of data with small $p$ and $q$ values on the estimate.
% It isn't clear how robust calculations for $\alpha$ would be to this choice of parameter.

We note that the asymptotic estimates of slope in Figures~\ref{fig:2D},~\ref{fig:SierpinskiEtc}, and~\ref{fig:torus} often perform better than the lines of best fit, especially in Figure~\ref{fig:2D} for 1-dimensional homology.
This improved performance is likely because whereas a linear fit places all random samples $X_n$ of $n$ data points (for varying values of $n$) on an equal footing, an asymptotic estimate weights more heavily the random samples $X_n$ in which $n$ is large.

\section{Conclusion}\label{sec:conclusion}

When points are sampled at random from a subset of Euclidean space, there are a wide variety of Euclidean functionals (such as the minimal spanning tree, the traveling salesperson tour, the optimal matching) which scale according to the dimension of Euclidean space~\cite{yukich2006probability}.
In this paper we explore whether similar properties are true for persistent homology, and how one might use these scalings in order to define a persistent homology fractal dimension for measures.
We provide experimental evidence for some of our conjectures, though that evidence is limited by the sample sizes on which we are able to compute.
Our hope is that our experiments are only a first step toward inspiring researchers to further develop the theory underlying the scaling properties of persistent homology.

\section{Acknowledgements}\label{sec:acknowledgements}

We would like to thank Visar Berisha, Vincent Divol, Al Hero, Sara Kali\v{s}nik, Louis Scharf, and Benjamin Schweinhart for their helpful conversations.
We would like to acknowledge the research group of Paul Bendich at Duke University for allowing us access their persistent homology package.
The ideas within this paper were greatly improved by the comments and suggestions from a very helpful anonymous referee. 
The first author would like to thank the organizers of the \emph{2018 Abel Symposium on Topological
Data Analysis} in Geiranger, Norway, for hosting a fantastic conference which was the inspiration for these proceedings. This work was supported by a grant from the Simons Foundation/SFARI (\#354225, CS).

\bibliographystyle{plain}
\bibliography{AFractalDimensionForMeasuresViaPersistentHomology}

\end{document}